\documentclass{aims}
\usepackage{amsmath}
  \usepackage{paralist}
  \usepackage{graphics} 
  \usepackage{epsfig} 
\usepackage{graphicx}  \usepackage{epstopdf}
 \usepackage[colorlinks=true]{hyperref}
\hypersetup{urlcolor=blue, citecolor=red}

\usepackage{comment}
  \textheight=8.2 true in
   \textwidth=5.0 true in
    \topmargin 30pt
     \setcounter{page}{1}



\newtheorem{theorem}{Theorem}[section]
\newtheorem{corollary}{Corollary}

\newtheorem{lemma}[theorem]{Lemma}

\theoremstyle{definition}
\newtheorem{definition}[theorem]{Definition}
\newtheorem{remark}{Remark}

\title[Steplength Thresholds for Invariance Preserving] 
      {Steplength Thresholds for Invariance Preserving of Discretization Methods of Dynamical Systems on a Polyhedron}

\author[Zolt\'{a}n Horv\'{a}th, Yunfei Song and and Tam\'{a}s
Terlaky]{}

\subjclass{Primary: 34A30, 37M25, 65K05.}
 \keywords{Dynamical System, Invariant Set, Polyhedron, Discretization method, Invariance Preserving.}

 \email{horvathz@sze.hu}
 \email{yus210@lehigh.edu}
 \email{terlaky@lehigh.edu}

\thanks{ }

\begin{document}
\maketitle

\centerline{\scshape Zolt\'{a}n Horv\'{a}th }
\medskip
{\footnotesize
 \centerline{Department of Mathematics and Computational Sciences}
\centerline{Sz\'{e}chenyi Istv\'{a}n
University}
   \centerline{ 9026 Gy\H{o}r, Egyetem t\'{e}r 1, Hungary}
} 

\medskip

\centerline{\scshape Yunfei Song and Tam\'{a}s
Terlaky}
\medskip
{\footnotesize
 \centerline{Department of Industrial and Systems Engineering}
   \centerline{Lehigh University}
   \centerline{200 West Packer Avenue, Bethlehem, PA,
18015-1582, USA}
}

\bigskip

 \centerline{(Communicated by  Kok Lay Teo)}

\begin{abstract}
Steplength thresholds for invariance preserving of three types of  discretization methods on a polyhedron are considered. For Taylor approximation type discretization methods we prove that a valid steplength threshold can be obtained  by finding the first positive zeros of a finite number of polynomial functions. Further, a simple and efficient algorithm is proposed to numerically compute the steplength threshold.  For rational function type discretization methods we derive a valid  steplength threshold for invariance preserving, which can be computed by using an analogous algorithm as in the first case. The relationship between the previous two types of discretization methods and the forward Euler method is studied. Finally, we show that, for the forward Euler method,  the largest steplength threshold for invariance preserving can be computed by solving a finite number of linear optimization problems. 
\end{abstract}

\section{Introduction}

Invariant set is an important concept in the theory of dynamical systems and it has a wide range of applications in control. One of the reasons of the interest is due to the fact that invariant sets enable us to estimate the attraction region of a dynamical system. 
We consider linear continuous dynamical systems in the form 
\begin{equation}\label{dyna1}
\dot{x}(t)=A_cx(t),
\end{equation}
and discrete dynamical systems in the form 
\begin{equation}\label{dyna2}
x_{k+1}=A_dx_k,
\end{equation}
where $x_k, x(t)\in \mathbb{R}^n$ are the state variables, $A_c,A_d\in \mathbb{R}^{n\times n}$ are the coefficient matrices, and $t\in
\mathbb{R}$ and $k\in \mathbb{N}$ indicate continuous and discrete time steps, respectively. Note that equations (\ref{dyna1}) and (\ref{dyna2}) can be treated as autonomous systems or as controlled systems. In the latter case, the coefficient matrix $A_c$ $ (\text{or }A_d)$ can be represented in the form of $A+BF$, where $A$ is the open-loop state matrix, $B$ is the control matrix, and $F$ is the gain matrix. For simplicity, we use the term system to indicate dynamical system. 

Intuitively, a set $\mathcal{S}$ is called an invariant set for a  system, if all the trajectories of the  system, which are starting in $\mathcal{S}$, remain in $\mathcal{S}$. 
Numerous surveys on the theory and applications of invariant sets are published in the recent decades, see e.g.,  Blanchini \cite{Blanchini}. Recently, several sufficient and necessary conditions, which are simply refereed to as invariance conditions, are derived to verify if a set is an invariant set for a continuous or discrete system. Various convex sets with different characteristics are considered as candidates for invariant sets.  Invariance conditions for polyhedra are given in \cite{Blanchini3, blan5, caste, dorea2, dorea}. Ellipsoidal sets as invariant sets are analyzed in \cite{boyd}. Cones as invariant sets are studied in \cite{loewy, stern, Vander}. A novel unified approach to derive invariance conditions for polyhedra, ellipsoids, and cones is presented in \cite{song1}.

Although many mathematical techniques are developed to directly solve continuous  systems,  in practice, one usually solves a continuous  system by applying certain discretization methods.  Assume that a set is an invariant set for a continuous  system, then it should be also an invariant set for the discrete  system, which is obtained by the discretization method, i.e., discretization should preserve the invariance. However, this is not always true for every steplength used in the discretization method, thus it will be convenient if there exists a predictable threshold for valid invariance preserving steplength. The existence of such steplength thresholds of invariance preserving on various sets is thoroughly studied in \cite{song2}. In this paper, we consider three types of discretization methods on polyhedra and we aim to derive valid thresholds of the steplength in terms of explicit form or obtained by using efficiently computable algorithms. The popularity of polyhedra as invariant sets is  due to  the fact that the state and control variables are usually represented  in terms of linear inequalities. For Taylor approximation type discretization methods, i.e., the coefficient matrix of the discrete  system is derived from the Taylor expansion of $e^{A_c\Delta t}$, we present an algorithm to derive a valid steplength threshold for invariance preserving. In particular, the algorithm  aims to find the first positive zeros of some polynomial functions related to the  system and the polyhedron.  For  general rational function type discretization methods, i.e.,  the coefficient matrix of the discrete  system is a rational function with respect to $A_c$ and $\Delta t$,  we derive a valid steplength threshold  for invariance preserving that can be computed by using analogous methods as for the case of Taylor approximation type methods. This steplength threshold is related to the steplength threshold for the forward Euler method and the radius of absolute monotonicity of the discretization method. We note that this result is similar to the one presented in \cite{horv98, horv05}, where Runge-Kutta methods are considered.  Finally, we propose an optimization model to find the largest steplength threshold for the forward Euler method. We note that some results on the use of the forward Euler method to analyze invariance for  continuous dynamical systems can be found in \cite{Blanchini3, blan5}. 

\emph{Notation}: For the sake of simplicity, the following notational conventions are introduced.
 A \emph{nonnegative} matrix, denoted by $H\geq0$, means that all entries
of $H$ are nonnegative. An \emph{off-diagonal nonnegative} matrix,
denoted by $H\geq_o0$, means that all entries,
except the diagonal entries, of $H$ are nonnegative.

The paper is organized as follows. In Section \ref{sec:back}, some fundamental concepts, theorems, and the key problems in this paper are introduced. In Section \ref{sec:main}, we present our main results, i.e., deriving valid steplength thresholds for invariance preserving,  for the three types of discretization methods. Finally, conclusions are provided in Section \ref{sec:con}.

\section{Background}\label{sec:back}

We now introduce the definitions of invariant sets for continuous and discrete systems. 
\begin{definition}\label{def1}
A set $\mathcal{S}$ in $\mathbb{R}^n$ is an invariant set for
\begin{itemize}
\item the continuous system (\ref{dyna1}) if $x(0)\in $ $\mathcal{S}$ implies
$x(t)\in \mathcal{S}$,  for all $t\geq0$.
\item
the discrete system (\ref{dyna2}) if  $ x_k\in \mathcal{S}$
implies $ x_{k+1}\in \mathcal{S}$, for all
 $k\in \mathbb{N}$.
\end{itemize}
\end{definition}

According to the definitions of invariant sets, we have that an invariant set means that the continuous (or discrete) trajectory of the system remains in the same set. In fact, there is an alternative perspective, see e.g., \cite{song1}. In that interpretation  $\mathcal{S}$ is an invariant set for (\ref{dyna1}) if and only if $e^{A_ct}\mathcal{S}\subseteq\mathcal{S}$ for any $t\geq0$, and $\mathcal{S}$ is an invariant set for (\ref{dyna2}) if and only if $A_d\mathcal{S}\subseteq\mathcal{S}.$  

In this paper, candidate invariant sets are restricted to convex polyhedron in $\mathbb{R}^n$.  A  polyhedron $\mathcal{P}$ in $ \mathbb{R}^n$ can be characterized as the intersection of a finite number of half spaces. 
\begin{definition}
A  polyhedron $\mathcal{P}$ in $ \mathbb{R}^n$ is defined as 
\begin{equation}\label{poly1}
\mathcal{P}=\{x\in \mathbb{R}^n\,|\,g_1^Tx\leq b_1, g_2^Tx\leq b_2,...,g_m^Tx\leq b_m\}:=\{x\in \mathbb{R}^n\,|\,Gx\leq b\},
\end{equation}
where $g_1,g_2,...,g_m\in \mathbb{R}^n,$ $b\in\mathbb{R}^m,$ and $G^T=[g_1,g_2,...,g_m]\in \mathbb{R}^{n\times m} $.
\end{definition}

Two classical subsets of polyhedra are extensively studied in many applications. One is called polytope, which is a bounded polyhedron. The other one is called polyhedral cone, a polyhedron with $b=0$ in (\ref{poly1}), and the origin is its only vertex.

Given a system and a polyhedron, the invariance condition indicates sufficient and necessary condition such that the polyhedron is an invariant set for the system. There are many such equivalent invariance conditions, e.g., \cite{bits1, caste}. The most common ones are presented in Theorem \ref{poly}.  A novel and unified approach to derive these invariance conditions is proposed in \cite{song1}. The invariance conditions in Theorem \ref{poly} provide powerful and practical tools to verify whether a polyhedron is an invariant set for a given system. 

\begin{theorem}\label{poly}
\emph{\cite{bits1, caste, song1}}
A polyhedron $\mathcal{P}$ given in the form of (\ref{poly1}) is an invariant set for
\begin{itemize}
\item the continuous system (\ref{dyna1}) if and only if there exists an ${H}\in \mathbb{R}^{m\times m}$, such that 
\begin{equation}\label{invcont}
H\geq_o0, ~ HG=GA_c, \text{ and } Hb\leq0.
\end{equation}
\item the discrete system (\ref{dyna2}) if and only if there exists  an $\tilde{H}\in \mathbb{R}^{m\times m}$, such that
\begin{equation}\label{invdis}
\tilde{H}\geq0, ~\tilde{H}G=GA_d, \text{ and } \tilde{H}b\leq b.
\end{equation} 
\end{itemize}
\end{theorem}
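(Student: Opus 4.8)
\emph{Proof proposal.} The plan is to treat the two equivalences separately; in each case I would translate invariance into a pointwise linear inequality over $\mathcal{P}$ and then invoke a theorem of the alternative (equivalently, linear programming duality) to produce the certificate matrix one row at a time. I begin with the discrete case, which is purely algebraic. Using the reformulation $A_d\mathcal{S}\subseteq\mathcal{S}$ noted above, $\mathcal{P}$ is invariant for (\ref{dyna2}) if and only if $g_i^TA_dx\leq b_i$ whenever $Gx\leq b$, for each $i=1,\dots,m$. Assuming $\mathcal{P}\neq\emptyset$, this says that for every $i$ the linear program $\max\{(A_d^Tg_i)^Tx\mid Gx\leq b\}$ is feasible with optimal value at most $b_i$ (an unbounded value would contradict invariance). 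Strong duality then supplies $y^{(i)}\geq0$ with $G^Ty^{(i)}=A_d^Tg_i$ and $b^Ty^{(i)}\leq b_i$; taking $\tilde H$ to be the matrix whose rows are the $(y^{(i)})^T$ gives $\tilde H\geq0$, $\tilde HG=GA_d$, and $\tilde Hb\leq b$. Conversely, if such $\tilde H$ exists and $Gx\leq b$, then $GA_dx=\tilde HGx\leq\tilde Hb\leq b$ because $\tilde H\geq0$, so $A_dx\in\mathcal{P}$.

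For the continuous system I would use the classical Nagumo subtangentiality theorem: since $\mathcal{P}$ is closed and the field $x\mapsto A_cx$ is Lipschitz, $\mathcal{P}$ is invariant for (\ref{dyna1}) if and only if $A_cx$ lies in the tangent cone $T_{\mathcal{P}}(x)=\{v\mid g_i^Tv\leq0\text{ for all }i\in I(x)\}$ at every $x\in\mathcal{P}$, where $I(x)=\{i\mid g_i^Tx=b_i\}$; equivalently, $g_i^TA_cx\leq0$ whenever $x\in\mathcal{P}$ and $g_i^Tx=b_i$. The sufficiency direction is then a short computation: given $H$ with the stated properties and such an $x$, the identity $HG=GA_c$ gives $g_i^TA_cx=\sum_jH_{ij}g_j^Tx=H_{ii}b_i+\sum_{j\neq i}H_{ij}g_j^Tx\leq H_{ii}b_i+\sum_{j\neq i}H_{ij}b_j=(Hb)_i\leq0$, using $H_{ij}\geq0$ and $g_j^Tx\leq b_j$ for $j\neq i$ while the sign of the diagonal term $H_{ii}$ is immaterial because $g_i^Tx=b_i$ exactly.

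For necessity I would argue one index $i$ at a time. If the face $F_i=\{x\in\mathcal{P}\mid g_i^Tx=b_i\}$ is nonempty, consider the linear program $\max\{(A_c^Tg_i)^Tx\mid Gx\leq b,\ g_i^Tx\geq b_i\}$, whose feasible set is exactly $F_i$ and whose optimal value is at most $0$ by the subtangentiality condition, hence finite. Strong duality produces $y\geq0$ and $z\geq0$ with $G^Ty-zg_i=A_c^Tg_i$ and $b^Ty-zb_i\leq0$; setting $h^{(i)}:=y-ze_i$ yields a vector with $h^{(i)}_j=y_j\geq0$ for $j\neq i$, with $(h^{(i)})^TG=g_i^TA_c$ and $(h^{(i)})^Tb\leq0$. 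Stacking the rows $(h^{(i)})^T$ into $H$ gives $H\geq_o0$, $HG=GA_c$, and $Hb\leq0$, as required.

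The technical heart, and the step I expect to be most delicate, lies entirely on the necessity side of the continuous case. First, the necessity half of Nagumo's theorem must be justified for a polyhedron that may be unbounded or lower-dimensional; this comes from a first-order Taylor expansion of a trajectory emanating from a boundary point, but the bookkeeping with the active set $I(x)$ needs care. Second, the duality argument above presumes the face $F_i$ is nonempty; when the $i$-th inequality is (strictly) redundant, $F_i$ is empty and one must instead obtain $h^{(i)}$ either by a Farkas-type argument on the recession cone of $\mathcal{P}$ or by perturbing $b_i$ and passing to a limit. An appealing alternative that sidesteps Nagumo altogether is to use the fact that $\mathcal{P}$ is invariant for (\ref{dyna1}) if and only if it is invariant for the forward Euler iteration $x_{k+1}=(I+\tau A_c)x_k$ for all sufficiently small $\tau>0$, apply the discrete result to $I+\tau A_c$, and let $\tau\to0^+$; the difficulty is then relocated to establishing that forward Euler characterization, which is itself one of the motivating questions of the present paper.
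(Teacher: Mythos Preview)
The paper does not prove Theorem~\ref{poly}. It is stated as background and attributed to \cite{bits1, caste, song1}; the surrounding text explicitly says that ``a novel and unified approach to derive these invariance conditions is proposed in \cite{song1}.'' So there is no in-paper proof to compare against.

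That said, your proposal is essentially the standard argument one finds in those references. The discrete part is clean and complete: row-by-row LP duality on $\max\{g_i^TA_dx:\ Gx\le b\}$ is exactly how the Bitsoris condition is usually derived. The continuous part via Nagumo subtangentiality plus duality on each active face is also the classical route (this is the Castelan--Hennet argument in spirit). The two issues you flag yourself are the genuine ones: (i) redundant constraints with empty $F_i$, and (ii) justifying Nagumo for possibly unbounded or lower-dimensional $\mathcal P$. For (i), when $F_i=\emptyset$ the system $\{Gx\le b,\ -g_i^Tx\le -b_i\}$ is infeasible, so Farkas' lemma gives $y\ge0$, $z\ge0$ with $G^Ty-zg_i=0$ and $b^Ty-zb_i<0$; scaling and adding a suitable multiple of this certificate to any solution of $G^Th=A_c^Tg_i$ lets you manufacture an $h^{(i)}$ with nonnegative off-diagonal entries and $(h^{(i)})^Tb\le0$. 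This is routine but must be written out. For (ii), Nagumo's theorem in the form you need holds for any closed convex set and a globally Lipschitz field, which covers polyhedra regardless of boundedness or dimension.

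Your alternative route---reduce the continuous case to the discrete one via forward Euler for small $\tau$---is attractive, but as you note it presupposes exactly the equivalence the paper establishes later in Lemma~\ref{lem:reuler}. Using it here would make the development circular, so the Nagumo-plus-duality argument is the right choice.
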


From the theoretical perspective, when a discretization method is applied to a continuous system, the invariant polyhedron for the continuous system should also be an invariant set for the discrete system. This means that conditions (\ref{invcont}) and (\ref{invdis}) are satisfied simultaneously, when the system, polyhedron, and discretization method are given.  However, this is not always true. Intuitively, the smaller steplength used in the discretization method has larger possibility to yield that the polyhedron is also an invariant set for the discrete system. For the sake of self-contained presentation, the formal definitions of invariance preserving and steplength threshold are introduced as follows.

\begin{definition}\label{invpre}
Assume a polyhedron $\mathcal{P}$ is an invariant set for the continuous system (\ref{dyna1}), and a discretization method is applied to the continuous system to yield a discrete system.  If there exists a $\tau>0,$ such that $\mathcal{P}$ is also an invariant set for the discrete system for any steplength $\Delta t\in[0,\tau]$, then the discretization method is \textbf{invariance preserving} for $\Delta t\in[0,\tau]$ on $\mathcal{P},$ and $\tau$ is a \textbf{steplength threshold} for invariance preserving of this discretization method on $\mathcal{P}.$
\end{definition}

The steplength threshold in Definition \ref{invpre} implies that any value smaller than this threshold is also a valid steplength threshold\footnote{This is a key reason why the problem of finding a valid steplength threshold is not an easy problem. In the interval $[0,\tau]$, one needs to check every $\Delta t$ in this interval, which means that there are infinitely  many values  to be considered.}. This is an important property. In certain cases, a discretization method may be invariance preserving on a set in the form of $[0,\tau_1]\cup[\tau_2,\tau_3]$, where $\tau_1<\tau_2.$  Here we are only interested in finding $\tau_1$. We also note that the steplength threshold in Definition \ref{invpre} is uniform\footnote{This is another key reason why the problem of finding a valid steplength threshold is not an easy problem.} on $\mathcal{P}$, i.e., $\tau$ needs to be a valid steplenth threshold for every initial  point in $\mathcal{P}$.

Since a continuous system is usually solved by using various discretization methods in practice,  invariance preserving property of the chosen discretization method plays an important role. Further,  a larger steplength threshold has many advantages in practice. For example, for larger steplength, the size of the discretized system is smaller, which yields that the computation is less expensive.  Thus, we introduce the key problem in the paper:

\begin{quote}
\emph{Find a valid (if possible the largest) steplength threshold $\tau>0,$ such that a discretization method is invariance preserving for every $\Delta t\in [0,\tau]$ on $\mathcal{P}$.}
\end{quote}

\section{Main Results}\label{sec:main}
In this section, we present the approaches for computing a valid (or largest) steplength threshold such that three classes of discretization methods are invariance preserving on a polyhedron.  These three  classes of discretization methods are considered in the following order: Taylor approximation type discretization methods, rational function type discretizatin methods, and the forward Euler method. The Taylor approximation type represents a family of explicit methods. The rational function type is an extended family of the Taylor approximation type, which also includes some implicit methods. The relationship between these discretization methods and the forward Euler method is also studied. Finally, for the forward Euler method, we derive the largest steplength threshold for invariance preserving. 

\subsection{Taylor Approximation Type Discretization Methods}

We first consider the Taylor approximation type discretization methods. Note that the solution of the continuous system (\ref{dyna1}) is explicitly represented as $x(t)=e^{A_ct}x_0$, thus one can use the Taylor approximation  to numerically solve the continuous system.  The $p$-order Taylor approximation of $e^{A_c\Delta t}$ is given as follows:
\begin{equation}\label{eq13}
e^{A_c\Delta t}\approx I+A_c\Delta t+\frac{1}{2!}A_c^2\Delta t^2+\cdots+\frac{1}{p!}A_c^p\Delta t^p=\sum_{i=0}^p \frac{1}{i!}A_c^i\Delta t^i:=A_d.
\end{equation}
The discrete system obtained by applying the Taylor approximation type discretization methods is given as
$x_{k+1}=A_dx_{k},$ where $A_d$ is defined by (\ref{eq13}). In fact, the Taylor approximation type methods form a family of discretization methods. 
For example,  $p=1$ corresponds to the forward Euler method,  $p=2$ corresponds to the general Runge-Kutta 2nd order methods. 

\subsubsection{Existence of Steplength Threshold}
Our approach to derive  steplength threshold is based on the invariance conditions presented in 
Theorem \ref{poly}. The basic ideas is that we build the relationship between these two invariance conditions of the continuous and discrete systems. In fact, conditions (\ref{invcont}) and (\ref{invdis}) are essentially linear feasibility problems \cite{roos}. 
The unknowns in the two invariance conditions are the matrix $H$ and $\tilde{H}$ given by (\ref{invcont}) and (\ref{invdis}), respectively. Thus, the key is to find  relationship between those matrices. 

\begin{lemma}\label{lemgam}
\emph{\cite{horn}} Assume $H$ satisfies (\ref{invcont}), then there exists $\gamma>0$, such that $
\hat{H}=H+\gamma I\geq0.$ 
\end{lemma}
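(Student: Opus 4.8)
The plan is to exploit the fact that the perturbation $\gamma I$ affects only the diagonal of $H$. First I would recall that condition (\ref{invcont}) includes $H\geq_o 0$, which means every off-diagonal entry $H_{ij}$ with $i\neq j$ is already nonnegative. Adding $\gamma I$ leaves those entries untouched, so the only thing that needs to be arranged is nonnegativity of the diagonal entries of $\hat H$, i.e., $H_{ii}+\gamma\geq 0$ for $i=1,\dots,m$.

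Next I would make an explicit choice of $\gamma$: set $\gamma:=1+\max_{1\leq i\leq m}|H_{ii}|$, which is a well-defined and strictly positive real number because the maximum is taken over the finite index set $\{1,\dots,m\}$. Then for each $i$ we get $\hat H_{ii}=H_{ii}+\gamma\geq H_{ii}+|H_{ii}|+1\geq 1>0$, while $\hat H_{ij}=H_{ij}\geq 0$ for $i\neq j$. Hence every entry of $\hat H=H+\gamma I$ is nonnegative, that is, $\hat H\geq 0$, which is precisely the assertion.

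There is essentially no obstacle here: the statement is the routine observation that an off-diagonal nonnegative (Metzler / essentially nonnegative) matrix becomes entrywise nonnegative after a sufficiently large diagonal shift, and the only point worth recording is that the shift can be chosen strictly positive. If one wants the sharpest constant, $\gamma=\max\{0,-\min_{1\leq i\leq m}H_{ii}\}$ works whenever this quantity is positive, and otherwise any positive number does; this is exactly the elementary fact supplied by the cited reference \cite{horn}.
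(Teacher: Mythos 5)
Your proof is correct and follows essentially the same route as the paper: both arguments observe that the off-diagonal entries are already nonnegative by $H\geq_o 0$ and then pick $\gamma>0$ large enough to shift the diagonal entries into the nonnegative range (the paper uses $\gamma>\max\{0,-\min_i h_{ii}\}$, you use $\gamma=1+\max_i|H_{ii}|$, which is the same idea with a slightly more generous constant).
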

\begin{proof}
Since $H\geq_o0,$ we can choose $\gamma>\max\{0,-\min\{h_{ii}, ~1\leq i\leq n\}\}$, which yields 
$H+\gamma I\geq0.$ The result is immediate by taking $\hat{H}=H+\gamma I,$ 
\end{proof}

We note that $\gamma$ in Lemma \ref{lemgam} is not unique, e.g., any value greater than a valid $\gamma$ is also valid.  We will show more about the effect of $\gamma$ to the steplength threshold in Section \ref{sec:ration}, and the way to derive a larger steplength threshold based on $\gamma$ is also presented. 

\begin{lemma}\label{leminvar}
Assume $H$ satisfies (\ref{invcont}), and define 
\begin{equation}\label{p2:eq3}
\tilde{H}(\Delta t)=I+H\Delta t+\frac{1}{2!}H^2\Delta t^2+\cdots+\frac{1}{p!}H^p\Delta t^p=\sum_{i=0}^p \frac{1}{i!}H^i\Delta t^i.
\end{equation}
\begin{itemize}
\item[a).] For the $\gamma$ and $\hat{H}$ given in Lemma \ref{lemgam}, we have 
\begin{equation}\label{lemma453:eq1}
\tilde{H}(\Delta t)=f_0(\Delta t) I+f_1(\Delta t) \hat{H}+...+f_p(\Delta t) \hat{H}^p,
\end{equation}
where
\begin{equation}\label{lemma453:eq3}
f_i(\Delta t)=\sum_{k=i}^p\frac{(-1)^{k-i}}{k!}\binom{k}{i}\gamma^{k-i}\Delta t^k, \text{ for } i=0,1,...,p,
\end{equation}
and 
\begin{equation}\label{lemma87691}
\sum_{i=0}^p \gamma^if_i(\Delta t)=1.
\end{equation}
\item [b).]  Let $\tau=\min_{i=0,...,p}\{\tau_i\},$ where $\tau_i$ is the first positive zero of $f_i(\Delta t).$ Then for all $\Delta t\in [0,\tau],$ the matrix $\tilde{H}(\Delta t)$ satisfies (\ref{invdis}),
where $A_d$ is defined by (\ref{eq13}).
\end{itemize}
\end{lemma}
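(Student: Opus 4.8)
The plan is to establish parts (a) and (b) in sequence, since (b) relies crucially on the representation obtained in (a). For part (a), I would start from the definition of $\tilde H(\Delta t)$ in \eqref{p2:eq3} and substitute $H = \hat H - \gamma I$. Since $\hat H$ and $\gamma I$ commute, the binomial theorem gives $H^k = (\hat H - \gamma I)^k = \sum_{i=0}^k \binom{k}{i}(-\gamma)^{k-i}\hat H^i$. Plugging this into $\tilde H(\Delta t)=\sum_{k=0}^p \frac{1}{k!}H^k\Delta t^k$ and swapping the order of summation (collecting the coefficient of $\hat H^i$ over all $k\geq i$) yields exactly \eqref{lemma453:eq1} with $f_i(\Delta t)$ as in \eqref{lemma453:eq3}. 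This is a routine double-sum manipulation. The identity \eqref{lemma87691} I would verify by substituting $\gamma$ for the matrix variable in the scalar version of the same identity: the scalar polynomial $\sum_{k=0}^p \frac{1}{k!}(z-\gamma)^k$ evaluated via the binomial expansion equals $\sum_{i=0}^p f_i(\Delta t) z^i$ when we set the "step variable" appropriately, and at $z=\gamma$ this collapses to $\sum_{k=0}^p \frac{1}{k!}(\gamma-\gamma)^k\cdot(\text{powers of }\Delta t)$; being careful with how $\Delta t$ enters, the $k\geq 1$ terms vanish and only the $k=0$ term, namely $1$, survives. So \eqref{lemma87691} reduces to an algebraic identity in $\gamma$ and $\Delta t$.

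For part (b), I would check the three conditions in \eqref{invdis} for $\tilde H(\Delta t)$ with $A_d$ from \eqref{eq13}, on the interval $\Delta t\in[0,\tau]$. The equality condition $\tilde H(\Delta t)G = G A_d$ follows purely formally: since $HG = GA_c$ (from \eqref{invcont}), an easy induction gives $H^iG = GA_c^i$ for all $i$, and hence $\tilde H(\Delta t)G = \sum_{i=0}^p \frac{1}{i!}H^i\Delta t^i G = \sum_{i=0}^p \frac{1}{i!}G A_c^i\Delta t^i = GA_d$. For nonnegativity $\tilde H(\Delta t)\geq 0$: by \eqref{lemma453:eq1} we have $\tilde H(\Delta t)=\sum_{i=0}^p f_i(\Delta t)\hat H^i$, and each $\hat H^i\geq 0$ because $\hat H\geq 0$ (Lemma \ref{lemgam}) and products of nonnegative matrices are nonnegative. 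Each $f_i$ is a polynomial with $f_i(0)\geq 0$ (indeed $f_0(0)=1$, $f_i(0)=0$ for $i\geq 1$), so by definition of $\tau_i$ as the first positive zero of $f_i$, we get $f_i(\Delta t)\geq 0$ for all $\Delta t\in[0,\tau_i]\supseteq[0,\tau]$; hence $\tilde H(\Delta t)$ is a nonnegative combination of nonnegative matrices, so $\tilde H(\Delta t)\geq 0$.

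The remaining condition, $\tilde H(\Delta t)b\leq b$, is where the identity \eqref{lemma87691} does the work, and I expect this to be the main obstacle — or at least the one step that needs genuine insight rather than bookkeeping. From \eqref{invcont} we know $Hb\leq 0$, which inductively gives control on $H^i b$; combined with $\hat H = H+\gamma I\geq 0$ we have $\hat H^i b$ expressible via nonnegative matrices acting on $b$. The strategy is to write $\tilde H(\Delta t)b - b = \sum_{i=0}^p f_i(\Delta t)\hat H^i b - b$ and use \eqref{lemma87691}, i.e. $\sum_{i=0}^p \gamma^i f_i(\Delta t) = 1$, to rewrite $b$ as $\sum_{i=0}^p f_i(\Delta t)\gamma^i b$, so that the difference becomes $\sum_{i=0}^p f_i(\Delta t)(\hat H^i - \gamma^i I)b = \sum_{i=0}^p f_i(\Delta t)(\hat H^i - \gamma^i I)b$. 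One then factors $\hat H^i-\gamma^i I = (\hat H-\gamma I)\sum_{j=0}^{i-1}\gamma^j\hat H^{i-1-j} = H\cdot(\text{nonnegative matrix})$, and since $Hb\leq 0$ while the cofactor matrices and the $f_i(\Delta t)$ are all nonnegative on $[0,\tau]$, every term is $\leq 0$, giving $\tilde H(\Delta t)b\leq b$. Assembling the three verified conditions shows $\tilde H(\Delta t)$ satisfies \eqref{invdis}, completing part (b).
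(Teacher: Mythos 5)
Your proposal is correct and follows essentially the same route as the paper's proof: part a) by substituting $H=\hat{H}-\gamma I$, expanding with the binomial theorem and collecting the coefficient of $\hat{H}^i$, with the identity (\ref{lemma87691}) obtained by evaluating the scalar version of the expansion at $z=\gamma$; part b) by verifying the three conditions of (\ref{invdis}) via $H^iG=GA_c^i$, nonnegativity of the $f_i$ on $[0,\tau]$ together with $\hat{H}^i\geq0$, and the bound $\hat{H}^ib\leq\gamma^ib$ combined with (\ref{lemma87691}) (your telescoping factorization of $\hat{H}^i-\gamma^iI$ and the paper's iteration of $\gamma^{-1}\hat{H}b\leq b$ are the same estimate). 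The only point to tighten is the sign of $f_i$ near $0$ for $i\geq1$: since $f_i(0)=0$ there, the statement ``$f_i(0)\geq0$ plus no zero before $\tau_i$'' does not by itself fix the sign on $(0,\tau_i)$; one must note, as the paper does via $f_i(\Delta t)=\frac{\Delta t^i}{i!}\bigl(1+\mathcal{O}(\Delta t)\bigr)$, that the lowest-order term $\frac{\Delta t^i}{i!}$ is positive, so $f_i>0$ immediately to the right of $0$ and hence $f_i\geq0$ on all of $[0,\tau_i]$.
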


\begin{proof}
a). According  to Lemma \ref{lemgam},  there exists $\gamma>0$, such that $
\hat{H}=H+\gamma I\geq0.$  
The matrix $\tilde{H}(\Delta t)$ given by (\ref{p2:eq3}) is represented in terms of $\Delta t$. By substituting $H=\hat{H}-\gamma I$ into (\ref{p2:eq3}), we now reformulate $\tilde{H}(\Delta t)$ in terms of $\hat{H}$, i.e.,
\begin{equation}\label{lem45:eq2}
\begin{split}
\tilde{H}(\Delta t)&=I+(\hat{H}-\gamma I)\Delta t+\frac{1}{2!}(\hat{H}^2-2\gamma \hat{H}+\gamma^2I)\Delta t^2+\cdots\\
&~~~~~~+\frac{1}{p!}(\hat{H}^p-p\gamma\hat{H}^{p-1}+\cdots+(-1)^p \gamma^p I)\Delta t^p.\\
\end{split}
\end{equation}
According to (\ref{lem45:eq2}), the coefficients of $\hat{H}^i$, for $i=0,1,...,p$,  is given as 
\begin{equation*}
\frac{1}{i!}\Delta t^i+\frac{-1}{(i+1)!}\binom{i+1}{i}\gamma\Delta t^{i+1}+\frac{(-1)^2}{(i+2)!}\binom{i+2}{i}\gamma^2\Delta t^{i+2}+\cdots+\frac{(-1)^{p-i}}{p!}\binom{p}{i}\gamma^{p-i}\Delta t^{p},
\end{equation*}
which is the same as (\ref{lemma453:eq3}).

We note that $\sum_{i=0}^p \gamma^if_i(\Delta t)$ is equivalent to replacing $I$ and $\hat{H}$ in (\ref{lemma453:eq1}) by 1 and $\gamma$, respectively.  Then, according to (\ref{lem45:eq2}), we have
\begin{equation}\label{lemma453:eq31}
\sum_{i=0}^p \gamma^if_i(\Delta t)=\sum_{i=0}^p\frac{1}{i!}(\gamma \Delta t)^i\sum_{k=0}^i(-1)^k\binom{i}{k}.
\end{equation}
For  $i>0$, we have $\sum_{k=0}^i(-1)^k\binom{i}{k}=(x-1)^{i}|_{x=1}=0,$ implying that the right hand side of (\ref{lemma453:eq31}) equals to 1, thus (\ref{lemma87691}) follows immediately.

b).  We note that for every $i$ the first term of $f_i(\Delta t)$ given as in (\ref{lemma453:eq3}) is $\frac{1}{i!}\Delta t^i$.  Then we can write 
\begin{equation}\label{func}
f_i(\Delta t)=\frac{\Delta t^i}{i!}\big(1+\mathcal{O}(\Delta t)\big).
\end{equation}
Thus, we have that there exists a $\tau_i>0$, i.e., the first positive zero of $f_i(\Delta t),$ where $\tau_i$ may be infinity, such that $f_i(\Delta t)\geq0$ for all $\Delta t \in[0,\tau_i].$ Then we let 
\begin{equation}\label{valtau}
\tau=\min_{i=0,1,...,p}\{\tau_i\},
\end{equation}
thus we have $f_i(\Delta t)\geq0$ for all $\Delta t \in[0,\tau]$ and $i=0,1,...,p.$ According to (\ref{lemma453:eq1}), and by noting that $\hat{H}^i\geq0$ for any $i=1,2,...,p$, we have that $\tilde{H}(\Delta t)\geq0$ for all $\Delta t\in [0,\tau],$ where $\tau$ is defined by (\ref{valtau}).  Thus, we have proved that the first condition in (\ref{invdis}) is satisfied.

By recursively using $HG=GA_c$, for any $i,$ we have 
\begin{equation}\label{p21:eq1}
H^iG=H^{i-1}(HG)=H^{i-1}GA_c=H^{i-2}(HG)A_c=H^{i-2}GA_c^2=...=GA_c^i.
\end{equation}
Then, according to  (\ref{p21:eq1}), and substituting (\ref{p2:eq3}) and (\ref{eq13}), we have
\begin{equation*}
\tilde{H}(\Delta t)G=\sum_{i=0}^p \frac{1}{i!}H^iG\Delta t^i=\sum_{i=0}^p \frac{1}{i!}GA^i\Delta t^i=G\sum_{i=0}^p \frac{1}{i!}A^i\Delta t^i=GA_d.
\end{equation*}
Thus, we have proved that the second condition in (\ref{invdis}) is satisfied. 

Since $H$ satisfies (\ref{invcont}), we have $Hb\leq0$. Also, note that $H=\hat{H}-\gamma I$, thus we have
$
(\hat{H}-\gamma I)b\leq0, \text{ i.e., } \frac{\hat{H}}{\gamma}b\leq b.
$
Since $\frac{\hat{H}}{\gamma}\geq0$, we have
\begin{equation}\label{eq3875o1}
\Big(\frac{\hat{H}}{\gamma}\Big)^ib\leq b, \text{ i.e., } \hat{H}^ib\leq \gamma^ib, \text{ for any } i=1,2,...,p.
\end{equation}
Then, according to (\ref{eq3875o1}) and (\ref{lemma87691}), we have
\begin{equation*}
\begin{split}
\tilde{H}_{\Delta t}b&=(f_0(\Delta t) I+f_1(\Delta t) \hat{H}+\cdots+f_p(\Delta t) \hat{H}^p)b\\
          &\leq (f_0(\Delta t)+\gamma f_1(\Delta t)+\cdots+\gamma^p f_p(\Delta t))b\\
          &\leq b.
\end{split}
\end{equation*}
Thus, we have proved that the third condition in (\ref{invdis}) is satisfied. The proof is complete. 
\end{proof}

Lemma \ref{leminvar} presents an important relationship between the two matrices $H$ and $\tilde{H}$ corresponding to the continuous and discrete systems invariance conditions. This relationship is explicitly represented in (\ref{p2:eq3}), which is derived from the Taylor approximation  (\ref{eq13}).  
According to Lemma \ref{leminvar} and Theorem \ref{poly}, we have the following theorem.

\begin{theorem}\label{thm123}
Assume a polyhedron $\mathcal{P}$ be given as in (\ref{poly1}) is an invariant set for the continuous system (\ref{dyna1}), and a Taylor approximation type discretization method (\ref{eq13}) is applied to the continuous system (\ref{dyna1}). Then, the  steplength threshold $\tau>0$ as given in  Lemma \ref{leminvar} is a valid steplength threshold for invariance preserving for the given Taylor approximation type discretization method (\ref{eq13}) on $\mathcal{P}$.
\end{theorem}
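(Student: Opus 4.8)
The plan is to combine Theorem \ref{poly} with Lemma \ref{leminvar} directly. Since $\mathcal{P}$ is an invariant set for the continuous system (\ref{dyna1}), the first bullet of Theorem \ref{poly} guarantees the existence of a matrix $H\in\mathbb{R}^{m\times m}$ satisfying (\ref{invcont}), namely $H\geq_o 0$, $HG=GA_c$, and $Hb\leq 0$. This $H$ is exactly the hypothesis required to invoke Lemma \ref{leminvar}.

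With that $H$ fixed, I would define $\tilde{H}(\Delta t)$ by (\ref{p2:eq3}) and let $\tau=\min_{i=0,\dots,p}\{\tau_i\}$ as in part b) of Lemma \ref{leminvar}, where $\tau_i$ is the first positive zero of $f_i$. Note $\tau>0$ because each $f_i(\Delta t)=\frac{\Delta t^i}{i!}(1+\mathcal{O}(\Delta t))$ is positive on a nonempty interval $(0,\tau_i)$ (with the convention $f_0(0)=1>0$), so the minimum of finitely many strictly positive quantities is strictly positive. Part b) of Lemma \ref{leminvar} then asserts precisely that for every $\Delta t\in[0,\tau]$ the matrix $\tilde{H}(\Delta t)$ satisfies the three conditions in (\ref{invdis}) with $A_d$ given by (\ref{eq13}).

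By the second bullet of Theorem \ref{poly}, the existence of such a $\tilde{H}(\Delta t)$ satisfying (\ref{invdis}) is equivalent to $\mathcal{P}$ being an invariant set for the discrete system $x_{k+1}=A_d x_k$. Since this holds for every $\Delta t\in[0,\tau]$, Definition \ref{invpre} is satisfied, so the Taylor approximation type discretization method (\ref{eq13}) is invariance preserving for $\Delta t\in[0,\tau]$ on $\mathcal{P}$, and $\tau$ is a valid steplength threshold. This completes the proof.

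The argument is essentially an assembly of already-proved components, so there is no serious obstacle; the only point requiring a word of care is confirming $\tau>0$, i.e.\ that none of the $f_i$ vanishes at or immediately to the right of the origin — which is immediate from the leading-term expansion (\ref{func}) — and observing that uniformity over $\mathcal{P}$ is automatic because the same $\tilde H(\Delta t)$ certifies invariance for all initial points simultaneously.
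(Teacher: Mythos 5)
Your proposal is correct and follows exactly the route the paper intends: the paper presents Theorem \ref{thm123} as an immediate consequence of Lemma \ref{leminvar} combined with both directions of Theorem \ref{poly}, which is precisely your assembly. Your added remarks that $\tau>0$ follows from the leading-term expansion (\ref{func}) and that uniformity over $\mathcal{P}$ is automatic are accurate and only make the argument more explicit.
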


According to the proof of Lemma \ref{leminvar}, we have that a valid $\tau$ requires $f_i(\Delta t)\geq0$ for all $\Delta t\in[0,\tau]$ and all $i=0,1,...,p$, where $f_i(\Delta t)$ given as (\ref{lemma453:eq3}).  Since each $f_i(\Delta t)$ can be represented in the form of (\ref{func}),  the following corollary is immediate.

\begin{corollary}
The value of $\tau$ given in Theorem \ref{thm123} (or Lemma \ref{leminvar}) is a valid steplength threshold for invariance preserving on $\mathcal{P}$ for the Taylor approximation type discretization methods  (\ref{eq13}). To compute $\tau$, one needs to find the first positive zeros of finitely many  polynomial functions in the form
\begin{equation}\label{thm311:eq1}
f(\Delta t)=1+\alpha_1\Delta t+\alpha_2\Delta t^2+...+\alpha_q\Delta t^q, ~~~\alpha_q\neq0,
\end{equation} 
where $\alpha_1,\alpha_2,...,\alpha_q\in \mathbb{R}$ and $q\in \mathbb{N}$.
\end{corollary}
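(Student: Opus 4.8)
The plan is to deduce the Corollary directly from Theorem~\ref{thm123} and the structure of the functions $f_i(\Delta t)$ exhibited in Lemma~\ref{leminvar}, so essentially no new mathematical content is needed --- the work is one of bookkeeping and normalization. First I would recall that, by Theorem~\ref{thm123}, the quantity $\tau=\min_{i=0,\dots,p}\{\tau_i\}$ is already established to be a valid steplength threshold, where each $\tau_i$ is the first positive zero of $f_i(\Delta t)$ as defined in~(\ref{lemma453:eq3}). Hence the only claim still to be justified is that each $f_i$ can be brought into the normalized polynomial shape~(\ref{thm311:eq1}), i.e.\ constant term equal to $1$ and finitely many real coefficients, so that computing $\tau$ reduces to a finite sequence of ``first positive root'' computations for such polynomials.

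The key steps, in order, would be: (i) fix $i\in\{0,1,\dots,p\}$ and write out $f_i(\Delta t)$ from~(\ref{lemma453:eq3}); observe that its lowest-order term is exactly $\frac{1}{i!}\Delta t^i$, as already noted in~(\ref{func}), and that all coefficients are real (they are rational combinations of $\gamma$ and binomial coefficients). (ii) Factor out $\frac{\Delta t^i}{i!}$ to write $f_i(\Delta t)=\frac{\Delta t^i}{i!}\,\bigl(1+\mathcal{O}(\Delta t)\bigr)$; the factor $\frac{\Delta t^i}{i!}$ is strictly positive for $\Delta t>0$ and vanishes only at $\Delta t=0$, so the sign of $f_i$ on $(0,\infty)$, and in particular its first positive zero $\tau_i$, coincides with that of the residual polynomial $\hat f_i(\Delta t):=1+\mathcal{O}(\Delta t)$. (iii) Note that $\hat f_i$ has constant term $1$, has real coefficients, and has some degree $q=q_i\le p-i$ with nonzero leading coefficient (or is identically $1$, in which case $\tau_i=+\infty$ and contributes nothing to the minimum) --- this is precisely the form~(\ref{thm311:eq1}). (iv) Take the minimum over the $p+1$ indices $i$; since $p$ is finite, $\tau$ is obtained from finitely many such polynomials, which completes the Corollary.

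I would also add a half-sentence remark that ``finding the first positive zero'' of a real univariate polynomial is a well-posed, finitely-terminating task (e.g.\ via root isolation / Sturm sequences, or the simple bisection-type algorithm alluded to in the introduction), so the phrase ``one needs to find the first positive zeros of finitely many polynomial functions'' is an honest description of an effectively computable procedure, not merely an existence statement.

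The main obstacle --- such as it is --- is purely a matter of careful wording rather than mathematics: one must be precise about the degenerate cases, namely when $\hat f_i\equiv 1$ (no positive zero, $\tau_i=\infty$) and when $\hat f_i$ is nonconstant but has no positive real root (again $\tau_i=\infty$), and confirm that in all cases the final $\tau=\min_i\tau_i$ is either a genuine positive number or $+\infty$, and in the former case is attained as the first positive zero of one of the normalized polynomials~(\ref{thm311:eq1}). Since Lemma~\ref{leminvar} and~(\ref{func}) have already done the substantive work, writing this out should take only a few lines.
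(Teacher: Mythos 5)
Your proposal is correct and follows essentially the same route as the paper: the authors likewise observe that each $f_i(\Delta t)$ from (\ref{lemma453:eq3}) factors as $\frac{\Delta t^i}{i!}\bigl(1+\mathcal{O}(\Delta t)\bigr)$ as in (\ref{func}), so that after removing the positive factor $\frac{\Delta t^i}{i!}$ the first positive zero of $f_i$ coincides with that of a normalized polynomial of the form (\ref{thm311:eq1}), and there are only $p+1$ of them. Your explicit treatment of the degenerate cases ($\hat f_i\equiv 1$ or no positive real root, giving $\tau_i=\infty$) is a slight refinement the paper defers to Lemmas \ref{lemm321} and \ref{lema:bnd}, but it does not change the argument.
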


In fact, Lemma \ref{leminvar} can be extended to a more general case for polynomial approximation rather than Taylor type discretization methods. 
\begin{theorem}\label{leminvar1}
Assume $H$ satisfies (\ref{invcont}), and define 
\begin{equation}\label{p2:eqd3}
\tilde{H}(\Delta t)=I+\sigma_1H\Delta t+\sigma_2H^2\Delta t^2+\cdots+\sigma_pH^p\Delta t^p=\sum_{i=0}^p \sigma_iH^i\Delta t^i.
\end{equation}
\begin{itemize}
\item[a).] For the $\gamma$ and $\hat{H}$ given in Lemma \ref{lemgam}, we have 
\begin{equation}\label{lemma4d:eq1}
\tilde{H}(\Delta t)=f_0(\Delta t) I+f_1(\Delta t) \hat{H}+...+f_p(\Delta t) \hat{H}^p,
\end{equation}
where
\begin{equation}\label{lemma4d:eq3}
f_i(\Delta t)=\sum_{k=i}^p{(-1)^{k-i}}\sigma_k\binom{k}{i}\gamma^{k-i}\Delta t^k, \text{ for } i=0,1,...,p,
\end{equation}
and 
\begin{equation}\label{lemma87dd}
\sum_{i=0}^p \gamma^if_i(\Delta t)=1.
\end{equation}
\item [b).]  Let $\tau=\min_{i=0,...,p}\{\tau_i\},$ where $\tau_i$ is the first positive zero of $f_i(\Delta t).$ Then for all $\Delta t\in [0,\tau],$ the matrix $\tilde{H}(\Delta t)$ satisfies (\ref{invdis}),
where $A_d$ is defined by (\ref{eq13}).
\end{itemize}
\end{theorem}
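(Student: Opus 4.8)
The plan is to run the proof of Lemma~\ref{leminvar} essentially verbatim: none of the steps there used the specific values $1/i!$ of the Taylor coefficients beyond the facts that the constant coefficient equals $1$ and that each coefficient is positive, so replacing $1/i!$ by $\sigma_i$ (with the convention $\sigma_0=1$, forced by the leading $I$ in (\ref{p2:eqd3})) should go through with only cosmetic changes.

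For part a), I would substitute $H=\hat H-\gamma I$ into (\ref{p2:eqd3}), expand each term $\sigma_kH^k\Delta t^k$ by the binomial theorem $H^k=\sum_{j=0}^k\binom{k}{j}(-\gamma)^{k-j}\hat H^j$, and collect the coefficient of $\hat H^i$; summing the contributions of all $k\ge i$ gives precisely the expression (\ref{lemma4d:eq3}). For the normalization (\ref{lemma87dd}), the cleanest argument is that $\sum_{i=0}^p\gamma^if_i(\Delta t)$ is what one gets from (\ref{lemma4d:eq1}) under the scalar substitution $\hat H\mapsto\gamma$, $I\mapsto 1$, and that substitution amounts to setting $H=\hat H-\gamma I\mapsto 0$ in (\ref{p2:eqd3}), which leaves only the constant term $\sigma_0=1$. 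Alternatively, interchange the order of summation to write $\sum_i\gamma^if_i(\Delta t)=\sum_{k=0}^p\sigma_k(\gamma\Delta t)^k\sum_{i=0}^k(-1)^{k-i}\binom{k}{i}$ and use $(1-1)^k=0$ for $k\ge1$, exactly as in (\ref{lemma453:eq31}).

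For part b) I would redo the three checks of (\ref{invdis}). Nonnegativity: each $f_i$ has the shape $f_i(\Delta t)=\sigma_i\Delta t^i\bigl(1+\mathcal O(\Delta t)\bigr)$, so for small $\Delta t>0$ it is positive and hence has a well-defined first positive zero $\tau_i\in(0,+\infty]$; with $\tau=\min_i\tau_i$ all $f_i(\Delta t)\ge0$ on $[0,\tau]$, and since $\hat H\ge0$ by Lemma~\ref{lemgam} every power $\hat H^i\ge0$, so (\ref{lemma4d:eq1}) gives $\tilde H(\Delta t)\ge0$. Second condition: $HG=GA_c$ iterates to $H^iG=GA_c^i$, whence $\tilde H(\Delta t)G=\sum_i\sigma_iH^iG\,\Delta t^i=G\sum_i\sigma_iA_c^i\Delta t^i=GA_d$, where here $A_d=\sum_{i=0}^p\sigma_iA_c^i\Delta t^i$ is the coefficient matrix of the polynomial method (for $\sigma_i=1/i!$ this is (\ref{eq13})). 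Third condition: $Hb\le0$ together with $H=\hat H-\gamma I$ gives $(\hat H/\gamma)b\le b$, which, using $\hat H/\gamma\ge0$, iterates to $\hat H^ib\le\gamma^ib$; multiplying by $f_i(\Delta t)\ge0$, summing over $i$, and invoking (\ref{lemma87dd}) yields $\tilde H(\Delta t)b\le\bigl(\sum_i\gamma^if_i(\Delta t)\bigr)b=b$.

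The only place where the Taylor structure was actually used, and the step I would flag as needing care, is the well-definedness of the $\tau_i$: it relies on $\sigma_i>0$ so that the leading term $\sigma_i\Delta t^i$ dominates near $0$. If some $\sigma_i$ vanish or are negative, the statement must be qualified — for instance by assuming $\sigma_i\ge0$ for every $i$ and reading $\tau_i=0$ whenever $f_i$ is negative immediately to the right of the origin — so I would insert such a hypothesis or remark; everything else is bookkeeping identical to Lemma~\ref{leminvar}.
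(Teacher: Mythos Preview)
Your proposal is correct and follows essentially the same route as the paper, which likewise substitutes $H=\hat H-\gamma I$, expands via the binomial theorem to extract the coefficients $f_i$, proves (\ref{lemma87dd}) by the identity $\sum_{k=0}^i(-1)^k\binom{i}{k}=0$ for $i\ge1$, and for part b) simply refers back verbatim to the proof of Lemma~\ref{leminvar}. Your caution about needing $\sigma_i>0$ for the $\tau_i$ to be well defined---and your observation that $A_d$ should really be $\sum_{i=0}^p\sigma_iA_c^i\Delta t^i$ rather than the Taylor expression (\ref{eq13})---is well placed: the paper does not address this in the proof itself and only later, in a separate Remark, restricts to $\sigma_i\ge0$.
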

\begin{proof}
a). According  to Lemma \ref{lemgam},  there exists a $\gamma>0$, such that $
\hat{H}=H+\gamma I\geq0.$
The matrix $\tilde{H}(\Delta t)$ given by (\ref{p2:eqd3}) is represented in terms of $\Delta t$. By substituting $H=\hat{H}-\gamma I$ into (\ref{p2:eqd3}), we now reformulate $\tilde H(\Delta t)$ in terms of $\hat{H}$, i.e.,
\begin{equation}\label{lem45:eqd2}
\begin{split}
\tilde{H}(\Delta t)&=I+\sigma_1(\hat{H}-\gamma I)\Delta t+\sigma_2(\hat{H}^2-2\gamma \hat{H}+\gamma^2I)\Delta t^2+\cdots\\
&~~~~~~+\sigma_p(\hat{H}^p-p\gamma\hat{H}^{p-1}+\cdots+(-1)^p \gamma^p I)\Delta t^p.\\
\end{split}
\end{equation}
According to (\ref{lem45:eqd2}), the coefficient of $\hat{H}^i$, for $i=0,1,...,p$,  is given as
\begin{equation*}
\sigma_i\Delta t^i-\sigma_{i+1}\binom{i+1}{i}\gamma\Delta t^{i+1}+\sigma_{i+2}\binom{i+2}{i}\gamma^2\Delta t^{i+2}+\cdots+(-1)^{p-i}\sigma_p\binom{p}{i}\gamma^{p-i}\Delta t^{p},
\end{equation*}
which is the same as (\ref{lemma4d:eq3}).

We note that $\sum_{i=0}^p \gamma^if_i(\Delta t)$ is equivalent to replacing $I$ and $\hat{H}$ by 1 and $\gamma$, respectively, in (\ref{lemma4d:eq1}).  Then, according to (\ref{lem45:eqd2}), we have
\begin{equation}\label{lemma453:eqd31}
\sum_{i=0}^p \gamma^if_i(\Delta t)=\sum_{i=0}^p\alpha_i(\gamma \Delta t)^i\sum_{k=0}^i(-1)^k\binom{i}{k}.
\end{equation}
For  $i>0$, we have $\sum_{k=0}^i(-1)^k\binom{i}{k}=(x-1)^{i}|_{x=1}=0,$ implying that the right hand side of (\ref{lemma453:eqd31}) equals to 1, thus (\ref{lemma87dd}) follows immediately.

The proof for Part b) is the same as the one presented for Part b) in Lemma \ref{leminvar}, thus we are not presenting here. 
\end{proof}

\subsubsection{Compute Steplength Threshold}\label{sec:ty2}
We now consider the value of $\tau$, i.e., the steplength threshold. In this section, we present an algorithm to numerically compute $\tau$. In particular, this algorithm aims to find the first positive zero of a polynomial function in the form of (\ref{thm311:eq1}). 

\begin{lemma}\label{lemm321}
Let $f(\Delta t)$ be given as in (\ref{thm311:eq1}). There exists a $\tau^*>0$, such that $f(\Delta t)\geq0$ for all $\Delta t\in [0,\tau^*]. $
\end{lemma}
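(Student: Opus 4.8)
The plan is to exploit the continuity of the polynomial $f$ together with the normalization $f(0)=1>0$. First I would dispose of the degenerate situation: if $q=0$ (so $f\equiv 1$) the statement is trivial, hence assume $q\geq 1$, so that $f$ is a nonconstant polynomial, continuous on $[0,\infty)$, with at most $q$ real zeros. Introduce the set $Z=\{t>0\mid f(t)=0\}$ of positive zeros of $f$, which is finite.

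Next I would split into two cases. If $Z=\emptyset$, then $f$ has no zero on $[0,\infty)$ at all, since $f(0)=1\neq 0$ as well; by the intermediate value theorem $f$ cannot change sign on $[0,\infty)$, so $f(\Delta t)>0$ for every $\Delta t\geq 0$, and any positive number (or $\tau^*=+\infty$) serves as $\tau^*$. If $Z\neq\emptyset$, then being a nonempty finite set of positive reals it attains its minimum; set $\tau^*:=\min Z>0$, the first positive zero of $f$. On $[0,\tau^*)$ the polynomial $f$ has no zero, so again by the intermediate value theorem it retains the sign of $f(0)=1$, i.e.\ $f(\Delta t)>0$ for all $\Delta t\in[0,\tau^*)$; and $f(\tau^*)=0$. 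Hence $f(\Delta t)\geq 0$ for all $\Delta t\in[0,\tau^*]$, as claimed.

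The argument is elementary and I do not expect any genuine obstacle. The only point requiring a little care is the meaning of $\tau^*$ when $f$ has no positive root: this is handled by permitting $\tau^*=+\infty$, consistent with the convention already adopted for the $\tau_i$ in Lemma \ref{leminvar}. The real content of this subsection is not the existence statement in Lemma \ref{lemm321} itself but the algorithm to be developed next, which locates this first positive zero $\tau^*$ explicitly.
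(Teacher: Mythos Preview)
Your proof is correct and follows essentially the same approach as the paper: the paper's own proof is the single line ``Since $f(0)=1>0$, and $f(\Delta t)$ is a continuous function, the lemma is immediate.'' You have simply made the intermediate value argument explicit and identified the optimal $\tau^*$ as the first positive zero (or $+\infty$), which anticipates the subsequent discussion in the paper.
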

\begin{proof}
Since $f(0)=1>0$, and $f(\Delta t)$ is a continuous function, the lemma is immediate.
\end{proof}

Let $f(\Delta t)$ be given as  in (\ref{thm311:eq1}). If $\alpha_1, \alpha_2, ...,\alpha_q\geq0$, then $f(\Delta t)\geq0$ for all $\Delta t\geq0$, which implies $\tau^*=\infty$ in Lemma \ref{lemm321}. Also, since $f(\Delta t)$ is dominated by $\alpha_q\Delta t^q$ for $\Delta t\gg1$,  we have that $\tau^*=\infty$ implies $\alpha_q>0.$ Therefore, the largest $\tau^*$ that satisfies Lemma \ref{lemm321} is the first positive zero of $f(\Delta t),$ otherwise, we have $\tau^*=\infty.$
In fact, we can find a predicted large $t^*>0$, such that if there is no zeros of $f(\Delta t)$ in $[0,t^*],$ then we have $\tau^*=\infty.$ Note that this case only occurs when $\alpha_q\Delta t^q$ dominates $f(\Delta t)$. This is presented in the following lemma. 

\begin{lemma}\label{lema:bnd}
Let $f(\Delta t)$ be given as in (\ref{thm311:eq1}) and $\alpha_q>0$. Let  $\alpha^*=\max\{1,|\alpha_1|,|\alpha_2|,$ $...,|\alpha_{q-1}|\}$ and ${t}^*=\frac{\alpha^*}{\alpha_q}+1$. Then if $f(\Delta t)$ has no real zero in $[0,t^*]$, then $f(\Delta t)>0$ for all $\Delta t>0.$
\end{lemma}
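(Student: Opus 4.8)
The plan is to observe that $t^{*}$ is precisely the classical Cauchy bound for the moduli of the roots of $f$, so no real zero of $f$ can lie beyond $t^{*}$; combined with the hypothesis that $f$ has no zero in $[0,t^{*}]$ and with $f(0)=1>0$, continuity then forces $f$ to remain strictly positive on all of $(0,\infty)$. Concretely, I would split the argument at $\Delta t=t^{*}$.

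First I would dispose of the interval $[0,t^{*}]$. Since $f(0)=1>0$ and $f$ is continuous, the assumption that $f$ has no real zero in $[0,t^{*}]$ gives, by the intermediate value theorem, that $f(\Delta t)>0$ for every $\Delta t\in[0,t^{*}]$.

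Next I would treat $\Delta t>t^{*}$ by a direct estimate; note first that $t^{*}>1$ because $\alpha^{*}\ge 1$ and $\alpha_q>0$, so every such $\Delta t$ exceeds $1$. Bounding the lower-order terms by $\alpha^{*}$, which dominates the constant term $1=\alpha_0$ as well as each $|\alpha_i|$ with $i<q$, and summing the resulting finite geometric series yields
\begin{equation*}
f(\Delta t)\ \ge\ \alpha_q\Delta t^{q}-\alpha^{*}\sum_{i=0}^{q-1}\Delta t^{i}
\ =\ \alpha_q\Delta t^{q}-\alpha^{*}\,\frac{\Delta t^{q}-1}{\Delta t-1}
\ >\ \Delta t^{q}\Big(\alpha_q-\frac{\alpha^{*}}{\Delta t-1}\Big).
\end{equation*}
When $\Delta t>t^{*}=1+\alpha^{*}/\alpha_q$ we have $\Delta t-1>\alpha^{*}/\alpha_q$, hence $\alpha^{*}/(\Delta t-1)<\alpha_q$ and the right-hand side is strictly positive. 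Thus $f(\Delta t)>0$ for all $\Delta t>t^{*}$ as well, and combining the two ranges proves the statement; incidentally this also shows $\tau^{*}=\infty$ in Lemma \ref{lemm321} in this case.

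I do not anticipate a real obstacle here. The only points requiring care are that $\alpha^{*}$ must be taken large enough to absorb the constant term $1$ in addition to $\alpha_1,\dots,\alpha_{q-1}$ (this is exactly why $1$ is built into the definition of $\alpha^{*}$), and that one must first note $\Delta t>1$ so that the geometric-sum identity and the subsequent division by $\Delta t-1$ are legitimate. The degenerate case $q=1$ is immediate, since then $f(\Delta t)=1+\alpha_1\Delta t>0$ for every $\Delta t\ge 0$.
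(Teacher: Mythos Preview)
Your proof is correct and follows essentially the same route as the paper's: both split at $t^{*}$, use the intermediate value theorem on $[0,t^{*}]$, and then on $(t^{*},\infty)$ bound the lower-order terms by the geometric sum $\alpha^{*}\frac{\Delta t^{q}-1}{\Delta t-1}$ and invoke $\alpha_q>\frac{\alpha^{*}}{\Delta t-1}$. Your version is in fact slightly more explicit in justifying $t^{*}>1$ (so that the geometric-sum identity and the division by $\Delta t-1$ are legitimate) and in isolating the degenerate case $q=1$.
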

\begin{proof}
Since $f(\Delta t)$ has no real zero in $[0,t^*],$ we have $f(\Delta t)>0$ on $[0,t^*]$. Thus, we only need to prove the following holds:
\begin{equation*}
\alpha_q\Delta t^q>|1+\alpha_1\Delta t+\alpha_2\Delta t^2+...+\alpha_{q-1}\Delta t^{q-1}|, \text{ for all } \Delta t\in(t^*,\infty].
\end{equation*}
Note that $t^*=\frac{\alpha^*}{\alpha_q}+1$ implies $\alpha_q=\frac{\alpha^*}{t^*-1}>\frac{\alpha^*}{\Delta t-1}$ for all $\Delta t\in(t^*,\infty]$. Then we have 
\begin{equation*}
\begin{split}
|1+\alpha_1\Delta t+\alpha_2\Delta t^2+...+\alpha_{q-1}\Delta t^{q-1}| &\leq \alpha^*(1+\Delta t+\Delta t^2+...+\Delta t^{q-1})\\
                                   &=\alpha^*\frac{\Delta t^q-1}{\Delta t-1}<\alpha_q(\Delta t^q-1)<\alpha_q\Delta t^q.
\end{split}
\end{equation*}
The proof is complete.
\end{proof}

In fact, the value ${t}^*$ given in Lemma \ref{lema:bnd} can be considered as one of the termination criteria of the algorithm to find the first positive zero of $f(\Delta t)$, where $f(\Delta t)$ is defined as (\ref{thm311:eq1}). \\

The Sturm sequence $\{s_i(t)\}$ of $f(t)$  and the Sturm Theorem presented in the following definition play a key role in our algorithm. The Sturm Theorem aims to give the number of real zeros of a univariate polynomial function in an interval by using the property of Sturm sequence on the end points of the interval. 

\begin{definition} \emph{\cite{sturmfel}}
Let $f(t)$ be a univariate polynomial function. The \textbf{Sturm sequence} $\{s_i(t)\}, i=1,2,...$, of $f(t)$ is  defined as 
\begin{equation*}
s_0(t)=f(t),~~ s_1(t)=s'(t), ~~s_i(t)=-\emph{rem}(s_{i-2}(t), s_{i-1}(t)),~~ i\geq2,
\end{equation*}
where $s'(t)$ is the derivative of $s(t)$ with respect to $t$, and $s_i(t)$ is the negative of the remainder on division of $s_{i-2}(t)$ by $s_{i-1}(t).$
\end{definition}

For the sake of simplicity, we introduce the following definition and notation, which are used in the statement of the Sturm Theorem. 
\begin{definition}
For a sequence $\{\nu_i\}$, $i=1,2,...,q,$ the \textbf{number of sign changes}, denoted by $\#\{\nu_i\}$, is the number of the times of the signs change (zeros are ignored) from $\nu_1$ to $\nu_q$. 
\end{definition}

For example, if a sequence is given as $\{\nu_i\}=\{1,0,3,-2,0,2,-1,0,-3\}$, then the signs of the sequence are $\{+,0,+,-,0,+,-,0,-\}$. By eliminating all zeros, we have $\{+,+,-,+,-,-\}$, which  has 3 sign changes, i.e., $\#\{\nu_i\}=3.$

\begin{theorem}\label{sturm}
 \emph{\cite{sturmfel}}
\textbf{(Sturm Theorem)} Let $f(t)$ be a univariate polynomial function. If $\alpha<\beta$ and $f(\alpha), f(\beta)\neq0.$ Then the number of distinct real zeros of $f(t)$ in the interval $[\alpha,\beta]$ is equal to $|\#\{s_i(\alpha)\}-\#\{s_i(\beta)\}|,$ where $\{s_i(t)\}$ is the Sturm sequence of $f(t).$ 
\end{theorem}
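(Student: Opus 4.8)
The plan is to prove the classical Sturm Theorem by tracking how the sign-change count $V(t):=\#\{s_i(t)\}$ of the Sturm sequence evolves as $t$ increases from $\alpha$ to $\beta$. The first observation is that the recursion $s_{i-2}=q_{i-1}s_{i-1}-s_i$ is exactly the Euclidean algorithm with sign flips, so the last nonzero term $s_k$ equals $\gcd(f,f')$ up to a constant; when $f$ is squarefree this is a nonzero constant, which serves as the base case, and the general case is reduced to it at the end. Away from the finitely many real zeros of the terms $s_i$, the function $V$ is well-defined and locally constant, so it suffices to compute exactly how $V$ changes as $t$ crosses such a zero. The endpoint hypothesis $f(\alpha),f(\beta)\neq0$ ensures that no zero of $f$ sits at an endpoint, so $V$ is stable at $\alpha$ and at $\beta$ and $\#\{s_i(\alpha)\}$, $\#\{s_i(\beta)\}$ are the relevant values.

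Next I would establish two structural facts, first for squarefree $f$. (i) \emph{No two consecutive terms vanish at a common point}: if $s_{i-1}(c)=s_i(c)=0$, the identity $s_{i-2}=q_{i-1}s_{i-1}-s_i$ gives $s_{i-2}(c)=0$, and descending we reach $s_0(c)=s_1(c)=0$, i.e. $f(c)=f'(c)=0$, impossible for squarefree $f$. (ii) \emph{Zeros of interior terms are harmless}: if $0<i<k$ and $s_i(c)=0$, then $s_{i-1}(c)=-s_{i+1}(c)$ by the same identity, and both are nonzero by (i), so they have opposite signs; hence the triple $(s_{i-1},s_i,s_{i+1})$ exhibits exactly one sign change on each side of $c$, whatever sign $s_i$ acquires there. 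Combined with the fact that $s_k$ is a nonzero constant, this shows $V$ is unchanged as $t$ crosses any zero that is not a zero of $s_0=f$ (overlapping triples compose correctly because each invokes disjoint adjacent-pair transitions).

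Then I would analyze a zero $c$ of $f$ itself. Since $f$ is squarefree, $s_1(c)=f'(c)\neq0$, so near $c$ the term $s_1$ keeps the constant sign $\operatorname{sign}f'(c)$, while $f$, being locally monotone, has sign $-\operatorname{sign}f'(c)$ just left of $c$ and $+\operatorname{sign}f'(c)$ just right of $c$. Thus the pair $(s_0,s_1)$ shows one sign change immediately to the left of $c$ and none immediately to the right, while every other adjacent pair is locally unaffected by step (ii). Hence $V$ drops by exactly $1$ as $t$ crosses each real zero of $f$ and is otherwise constant, so $V$ is non-increasing; summing over the finitely many zeros of $f$ in $(\alpha,\beta)$ gives $\#\{s_i(\alpha)\}-\#\{s_i(\beta)\}$ equal to the number of distinct real zeros of $f$ in $[\alpha,\beta]$, which being nonnegative equals $|\#\{s_i(\alpha)\}-\#\{s_i(\beta)\}|$.

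Finally, the general (non-squarefree) case is the step I expect to demand the most care, since the naive reduction "replace $f$ by $f/\gcd(f,f')$" does not literally turn the Sturm sequence of $f$ into that of $f/\gcd(f,f')$. Instead I would argue locally: if $c$ is a zero of $f$ of multiplicity $m$, then $(t-c)^{m-1}$ divides $\gcd(f,f')$, hence (using that $s_k\mid s_{k-1}$ and running the recursion) divides every $s_i$ as a polynomial; cancelling this common factor near $c$ yields a sequence $\{\tilde s_i\}$ in which $\tilde s_0$ has a \emph{simple} zero at $c$, $\tilde s_1(c)\neq0$, $\tilde s_k(c)\neq0$, and facts (i)--(ii) still hold at $c$ (a common zero of $\tilde s_i,\tilde s_{i+1}$ would force $(t-c)^m\mid f$). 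Since multiplying a whole sequence by a common nonzero number preserves the number of sign changes — here $\#\{a_i\}=\#\{-a_i\}$ absorbs the possible sign of $(t-c)^{m-1}$ — one has $V(\{s_i(t)\})=V(\{\tilde s_i(t)\})$ for $t$ near but distinct from $c$, so the simple-zero analysis applies and each distinct real root of $f$ again contributes exactly one unit of drop in $V$. Making this cancellation rigorous, in particular verifying that $(t-c)^{m-1}$ is precisely the power dividing $s_k$, is the main technical point; the rest is routine bookkeeping on sign patterns.
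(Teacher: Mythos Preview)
The paper does not prove this theorem at all: it is stated as a classical result with the citation \cite{sturmfel} and is used as a black box in the subsequent bisection algorithm. So there is no ``paper's own proof'' to compare against.

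Your proposal is the standard argument and is essentially correct. A couple of remarks. First, the non-squarefree reduction can be done globally rather than locally: since $s_k=\gcd(f,f')$ (up to a nonzero constant) and the recursion $s_{i-2}=q_{i-1}s_{i-1}-s_i$ propagates divisibility, $s_k$ divides every $s_i$; writing $s_i=s_k\cdot\tilde s_i$ gives a sequence with $\tilde s_0=f/\gcd(f,f')$ squarefree and $\tilde s_k$ a nonzero constant, and since multiplying an entire sequence by a common nonzero value preserves the sign-change count, $\#\{s_i(t)\}=\#\{\tilde s_i(t)\}$ at every $t$ with $s_k(t)\neq0$. This sidesteps the ``precisely the power dividing $s_k$'' verification you flagged (which, incidentally, is immediate: a root of multiplicity $m$ in $f$ has multiplicity $m-1$ in $f'$, hence in $\gcd(f,f')$). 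Second, your claim that the $\tilde s_i$ satisfy the same recursion (hence (i)--(ii) hold for them) is what makes either reduction work, and that is indeed automatic once the common factor is cancelled. With these clarifications the sketch is complete; but for the purposes of this paper, citing Sturm and moving on---as the authors do---is entirely appropriate.
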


According to Lemma \ref{lema:bnd} and Theorem \ref{sturm}, we now propose our algorithm to numerically find the first positive zero of $f(\Delta t)$ where $f(\Delta t)$ is defined as (\ref{thm311:eq1}). Let us denote $\#f[\delta]$ the number of positive zeros of $f(\Delta t)$ at interval $[0,\delta].$ The value of $\#f[\delta]$ can be computed by Sturm Theorem \ref{sturm}. 
The basic idea in our algorithm is by using the bisection method to shrink the interval, which contains the first positive zero of $f(t)$, by 2 in each iteration. Our algorithm is presented as follows.

\begin{description}
\item[\textbf{Step 0: [Initial Inputs]}] Set $t^\circ=1$. Iterate $t^\circ=\frac{t^\circ}{2}$ until $\#f[t^\circ]=0.$ Let $t^*$ be given as in Lemma \ref{lema:bnd}.
\item[\textbf{Step 1: [Initial Setting]}] Set $t_l= t^\circ,$ $t_r= t^*$, and $\epsilon$ be the precision.
\item[\textbf{Step 2: [Termination 1]}] If $\#f[t_r]=0,$ then $\tau=\infty$. 
\item[\textbf{Step 3: [Termination 2]}]If $\#f[t_r]=1$ and $f(t_r)=0$, then $\tau=t^*.$ 

\item[\textbf{Step 4: [Bisection Method]}] Set $t_m=\frac{t_l+t_r}{2}$. 

Repeat until  $|t_l-t_r|<\epsilon$:
\begin{itemize}
\item \textbf{[Termination 3]}  If $\#f[t_m]=1$ and $f(t_m)=0$, then $\tau=t_m.$
\item \textbf{[Update $t_r$]} If $\#f[t_m]=1$ and $f(t_m)\neq 0,$ or $\#f[t_m]>1$, then set $t_r=t_m.$
\item \textbf{[Update $t_l$]} If $\#f[t_m]=0$, then set $t_l=t_m.$
\end{itemize}
End
\item[\textbf{Step 5: [Termination 4]}] If Step 4 is terminated at $|t_l-t_r|<\epsilon$, then $\tau=t_l.$\end{description}

The correctness of the termination condition in Step 2 is ensured by Lemma \ref{lema:bnd}. If neither of the termination conditions in Step 2 and 3 are satisfied, then it means that the first positive zero of $f(t)$ exists and is located in the interval $(t_l, t_r).$
The second case in Step 4  means that the first positive zero of $f(t)$ is located in the interval  $(t_l,t_m)$.
Analogously, the third case in Step 4  means that the first positive zero of $f(t)$ is located in the interval $(t_m,t_r)$.
In Step 5, we conclude that the first positive zero of $f(t)$ is located in the interval $(t_l,t_r).$ Recall that we are interested to find a value $\tau$, such that $f(t)\geq0$ for all $[0,\tau]$, thus we return $t_l$, i.e., the left end of the interval.

\begin{remark}
If all coefficients $\sigma_i\geq0$ for $i=1,2,...,p$ in (\ref{p2:eqd3}), then the algorithm is also applicable to compute a valid steplength threshold for invariance preserving for the polynomial approximation (\ref{p2:eqd3}).
\end{remark}

\subsection{Rational Function Type Discretization Methods}\label{sec:ration}
The previous discussion is mainly about a steplength threshold for invariance preserving for a Taylor approximation type discretization methods as specified in (\ref{eq13}).  In this section, we consider more general discretization methods, which are refereed to as the rational function type discretization methods. To be specific, these discretization methods applying to the continuous system yield the discrete system 
\begin{equation}\label{eq:rat}
x_{k+1}=r(A_c\Delta t)x_k,
\end{equation} 
where $r(t):\mathbb{R}\rightarrow \mathbb{R}$ is a rational function  defined as 
\begin{equation}\label{th2:eq11}
r(t)=\frac{g(t)}{h(t)}=\frac{\lambda_0+\lambda_1t+\cdots+\lambda_pt^p}{\mu_0+\mu_1t+\cdots+\mu_qt^q},
\end{equation}
where $\lambda_0,\lambda_1,...,\lambda_p\in\mathbb{R},~ \mu_0,\mu_1,...,\mu_q\in\mathbb{R}$, and  $p,q\in\mathbb{N}$. It is clear that Taylor approximation type discretization methods belong to this type.  Some implicit methods are also in this type, e.g., the backward Euler method, Lobatto methods \cite{ernst2}, etc.  
\begin{definition}\label{lem:rat}
\emph{\cite{high1}} Let $r(t)$ be given as in (\ref{th2:eq11}), and let $M$ be a matrix. Assume $h(M)$ is nonsingular, then 
\begin{equation}\label{rat:eq1}
r(M):=(h(M))^{-1}g(M)=g(M)(h(M))^{-1}.
\end{equation}
\end{definition}

\subsubsection{Existence of Steplength Threshold}
 In this  subsection, our analysis uses the so called \emph{radius of absolute monotonicity} of a function.
\begin{definition}
\emph{\cite{spik}} Let $r(t):\mathbb{R}\rightarrow \mathbb{R}$. If $\rho=\max\{\kappa\,|\,r^{(i)}(t)\geq0 \text{ for all } i=1,2,...,\text{ and } t\in [-\kappa,0]\},$ where $r^{(i)}(t)$ is the $i^{th}$ derivative of $r(t)$, then $\rho$ is called the \textbf{radius of absolute monotonicity} of $r(t)$.
\end{definition}

The radius of absolute monotonicity of a function is extensively used in the analysis of positivity, monotonicity, and contractivity of discretization methods for ordinary differential equations, see e.g., \cite{horv98,karaa1,spik}. 

\begin{theorem}\label{thm31}
Assume $r(t)$ is a rational function with $r(0)=1$. Let $\rho$ be the radius of absolute monotonicity of $r(t)$. Assume a polyhedron $\mathcal{P}$ be given as in (\ref{poly1}) is an invariant set for the continuous system (\ref{dyna1}), and the rational function type discretization method given as in (\ref{eq:rat}) is applied to the continuous system (\ref{dyna1}).   Then $\tau=\frac{\rho}{\gamma}$,  where $\gamma$ is given in Lemma \ref{lemgam}, is a valid steplength threshold  for invariance preserving of the rational function type discretization method given as in (\ref{eq:rat}) on  $\mathcal{P}$. 
\end{theorem}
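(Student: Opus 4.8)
The plan is to mimic the structure of the proof of Lemma~\ref{leminvar}, replacing the polynomial $\tilde H(\Delta t)$ by $r(H\Delta t)$ and using the radius of absolute monotonicity to guarantee the needed sign conditions. Concretely, let $H$ satisfy (\ref{invcont}), let $\gamma>0$ and $\hat H=H+\gamma I\geq 0$ be as in Lemma~\ref{lemgam}, and set $\tilde H(\Delta t)=r(H\Delta t)=r(\hat H\Delta t-\gamma\Delta t I)$, which is well defined for small $\Delta t$ since $r(0)=1\neq 0$ makes $h(H\Delta t)$ nonsingular near $\Delta t=0$. The first step is to expand $r$ around the point $-\gamma\Delta t$: writing $r(\hat H\Delta t-\gamma\Delta t I)=\sum_{i\ge 0}\frac{r^{(i)}(-\gamma\Delta t)}{i!}(\hat H\Delta t)^i$, so that $\tilde H(\Delta t)=\sum_{i\ge 0} f_i(\Delta t)\hat H^i$ with $f_i(\Delta t)=\frac{\Delta t^i}{i!}r^{(i)}(-\gamma\Delta t)$. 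By the definition of $\rho$, every $f_i(\Delta t)\geq 0$ as soon as $-\gamma\Delta t\in[-\rho,0]$, i.e. for all $\Delta t\in[0,\rho/\gamma]=[0,\tau]$. Since $\hat H^i\geq 0$ for all $i$, this already yields $\tilde H(\Delta t)\geq 0$ on $[0,\tau]$, the first condition in (\ref{invdis}).

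The second step is the algebraic identity $\tilde H(\Delta t)G=GA_d$ where $A_d=r(A_c\Delta t)$. This follows exactly as in (\ref{p21:eq1}): from $HG=GA_c$ one gets $H^iG=GA_c^i$ for all $i$, hence $g(H\Delta t)G=Gg(A_c\Delta t)$ and $h(H\Delta t)G=Gh(A_c\Delta t)$; combining these with $\tilde H(\Delta t)=h(H\Delta t)^{-1}g(H\Delta t)$ (Definition~\ref{lem:rat}) gives $h(H\Delta t)\tilde H(\Delta t)G = g(H\Delta t)G = Gg(A_c\Delta t) = Gh(A_c\Delta t)r(A_c\Delta t) = h(H\Delta t)G r(A_c\Delta t)$, and cancelling the nonsingular $h(H\Delta t)$ yields $\tilde H(\Delta t)G=GA_d$. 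The third step is $\tilde H(\Delta t)b\leq b$: from $Hb\leq 0$ we get $(\hat H/\gamma)b\leq b$, hence $(\hat H/\gamma)^i b\leq b$, i.e. $\hat H^i b\leq\gamma^i b$ for every $i$ (using $\hat H\geq 0$), exactly as in (\ref{eq3875o1}). Then $\tilde H(\Delta t)b=\sum_i f_i(\Delta t)\hat H^i b\leq \big(\sum_i \gamma^i f_i(\Delta t)\big)b$, and it remains to observe that $\sum_i\gamma^i f_i(\Delta t)=\sum_i\frac{(\gamma\Delta t)^i}{i!}r^{(i)}(-\gamma\Delta t)=r\big(-\gamma\Delta t+\gamma\Delta t\big)=r(0)=1$ by the Taylor expansion of $r$ at $-\gamma\Delta t$ evaluated at $0$; the inequality $f_i\ge 0$ from the first step is what makes the intermediate step $\sum_i f_i\hat H^i b \le \sum_i f_i \gamma^i b$ legitimate. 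Therefore $\tilde H(\Delta t)$ satisfies all of (\ref{invdis}) for $\Delta t\in[0,\tau]$, and by Theorem~\ref{poly} the polyhedron $\mathcal{P}$ is invariant for the discrete system; by Definition~\ref{invpre}, $\tau=\rho/\gamma$ is a valid steplength threshold.

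The main obstacle I anticipate is purely technical rather than conceptual: justifying the operator Taylor expansion $r(\hat H\Delta t-\gamma\Delta t I)=\sum_{i\ge 0}\frac{r^{(i)}(-\gamma\Delta t)}{i!}(\hat H\Delta t)^i$ and the convergence/finiteness issues, together with ensuring $h(H\Delta t)$ is nonsingular throughout $[0,\tau]$ (not merely near $0$). For a rational $r$ the cleanest route is to avoid infinite series altogether: write $r(H\Delta t)=h(H\Delta t)^{-1}g(H\Delta t)$ with $g,h$ polynomials, expand $g$ and $h$ separately in powers of $\hat H$ as in Lemma~\ref{leminvar}, and then verify the $f_i\ge 0$ claim by reducing it to the scalar statement: for each fixed $\Delta t$ the scalar function $s\mapsto r(s\Delta t)$ has nonnegative Taylor coefficients at $s=-\gamma$ precisely because $-\gamma\Delta t\in[-\rho,0]$, which is the defining property of $\rho$. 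One should also note that the same argument shows $\sum_i\gamma^i f_i(\Delta t)=1$ independently, matching (\ref{lemma87691})/(\ref{lemma87dd}). Handling possible poles of $r$ inside $[0,\tau]$ — i.e. points where $h(H\Delta t)$ becomes singular — may require shrinking $\tau$ further or observing that absolute monotonicity on $[-\rho,0]$ forces $h$ to be pole-free there, a point worth addressing explicitly.
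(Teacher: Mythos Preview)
Your proof is correct and complete at the level of rigor the paper itself maintains, but it is organized around a \emph{different expansion point} than the paper's argument. You expand $r$ in a Taylor series at the $\Delta t$-dependent point $-\gamma\Delta t$, so that $\tilde H(\Delta t)=\sum_i f_i(\Delta t)\hat H^i$ with the \emph{fixed} nonnegative matrix $\hat H=H+\gamma I$ and $\Delta t$-dependent scalar coefficients $f_i(\Delta t)=\tfrac{\Delta t^i}{i!}r^{(i)}(-\gamma\Delta t)$; nonnegativity of the $f_i$ on $[0,\rho/\gamma]$ then follows directly from $-\gamma\Delta t\in[-\rho,0]$. The paper instead expands $r$ at the \emph{fixed} point $-\rho$, obtaining $\tilde H(\Delta t)=\sum_i \tfrac{r^{(i)}(-\rho)}{i!}\Delta t^i\big(H+\tfrac{\rho}{\Delta t}I\big)^i$, with fixed nonnegative scalar coefficients and a $\Delta t$-dependent nonnegative matrix $H+\tfrac{\rho}{\Delta t}I\geq H+\gamma I\geq 0$. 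For the third condition the paper iterates $(H\Delta t+\rho I)b\le \rho b$ and sums to $r(0)b=b$, whereas you iterate $\hat H b\le \gamma b$ and sum to $r(0)b=b$; both reductions are valid.

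What your choice buys is a closer structural match with Lemma~\ref{leminvar}: the decomposition $\tilde H=\sum_i f_i\hat H^i$ with a single fixed $\hat H$ and the identity $\sum_i\gamma^i f_i(\Delta t)=r(0)=1$ exactly mirror (\ref{lemma453:eq1}) and (\ref{lemma87691}). The paper's choice buys fixed, $\Delta t$-independent expansion coefficients. Both routes share the same unaddressed technicalities (convergence of the operator Taylor series and nonsingularity of $h(H\Delta t)$ on the whole interval), which you explicitly flag and which the paper leaves implicit; your suggested remedy via the polynomial numerator/denominator would work and your observation that absolute monotonicity on $[-\rho,0]$ precludes poles there is the right way to dispose of the singularity concern.
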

\begin{proof}
The framework of this proof is similar to the one presented for Lemma \ref{leminvar}.  
Since $\mathcal{P}$ is an invariant set for the continuous system, according to Theorem \ref{poly} and  Lemma \ref{lemgam}, there exists an $H,$ and $\gamma>0$, such that
\begin{equation}\label{eq87}
H+\gamma I\geq0, ~HG=GA_c, \text{ and } Hb\leq0.
\end{equation}
Then, according to Theorem \ref{poly}, to ensure $\mathcal{P}$ is also an invariant set for the discrete system, we need to prove that there exists an $\tilde{H}(\Delta t)\in \mathbb{R}^{m\times m}$, such that 
\begin{equation}\label{eq89}
\tilde{H}(\Delta t)\geq0, ~\tilde{H}(\Delta t)G=Gr(A_c\Delta t), \text{ and } \tilde{H}(\Delta t)b\leq b.
\end{equation}
Let $\tilde{H}(\Delta t)=r(H\Delta t)$. Now we prove that  $\tilde{H}(\Delta t)$ satisfies (\ref{eq89}).  

For the first condition in (\ref{eq89}), we use the Taylor expansion of $r(t)$ at the value $-\rho$ as 
\begin{equation}\label{thm32:eq12}
r(t)=\sum_{i=0}^{\infty}\frac{r^{(i)}(-\rho)}{i!}(t+\rho)^i.
\end{equation}
By substituting $t=H\Delta t$ into (\ref{thm32:eq12}) we have
\begin{equation}\label{eq901}
\tilde{H}(\Delta t)=r(H\Delta t)=\sum_{i=0}^{\infty}\frac{r^{(i)}(-\rho)}{i!}(H\Delta t+\rho I)^i=\sum_{i=0}^{\infty}\frac{r^{(i)}(-\rho)}{i!}(\Delta t)^i\left(H+\frac{\rho}{\Delta t}I\right)^i.
\end{equation}
Since $\rho$ is the radius of absolute monotonicity of $r(t)$, we have $\frac{r^{(i)}(-\rho)}{i!}\geq0$ for all $i$. Also, according to (\ref{eq87}), and $\Delta t\leq \frac{\rho}{\gamma}$, i.e., $\frac{\rho}{\Delta t}\geq\gamma$,  so we have $H+\frac{\rho}{\Delta t} I\geq H+\gamma I\geq0$. Then we have $(H+\frac{\rho}{\Delta t} I)^i\geq0$ for all $i$. According to (\ref{eq901}), we have  $\tilde{H}(\Delta t)\geq0$ for  $\Delta t\leq \frac{\rho}{\gamma},$ thus the first condition in (\ref{eq89}) is satisfied. 

For the second condition in (\ref{eq89}), according to Definition \ref{lem:rat},  the second condition in (\ref{eq89}) can be rewritten as $(h(H\Delta t))^{-1}g(H\Delta t)G=Gg(A_c\Delta t)(h(A_c\Delta t))^{-1}$, i.e.,
\begin{equation}\label{th2:eq12}
g(H\Delta t)Gh(A_c\Delta t)=h(H\Delta t)Gg(A_c\Delta t).
\end{equation}
According to (\ref{th2:eq11}), we have 
\begin{equation}\label{th2:eq13}
\begin{split}
h(H\Delta t)Gg(A_c\Delta t)&=\sum_{i=1}^p\sum_{j=1}^q\lambda_i\mu_jH^iGH^j\Delta t^{i+j},\\
 g(H\Delta t)Gh(A_c\Delta t)&=\sum_{j=1}^q\sum_{i=1}^p\lambda_i\mu_jH^jGH^i\Delta t^{i+j}.
 \end{split}
\end{equation}
By recursively using $HG=GA_c$, for any $i,j$, we have 
\begin{equation}\label{th2:eq14}
H^iGA_c^j=GA_c^{i+j}=H^{i+j}G=H^jGA_c^i.
\end{equation}
According to (\ref{th2:eq13}) and (\ref{th2:eq14}), we have that (\ref{th2:eq12}) is true, i.e., the second condition (\ref{eq89}) is satisfied. 

For the third condition in (\ref{eq89}) we have
\begin{equation*}
\begin{split}
\tilde{H}(\Delta t)b=r(H\Delta t)b&=\sum_{i=0}^{\infty}\frac{r^{(i)}(-\rho)}{i!}(H\Delta t+\rho I)^ib\\
&=\sum_{i=0}^{\infty}\frac{r^{(i)}(-\rho)}{i!}(H\Delta t+\rho I)^{i-1}(H\Delta t+\rho I)b\\
&\leq\sum_{i=0}^{\infty}\frac{r^{(i)}(-\rho)}{i!}(H\Delta t+\rho I)^{i-1}\rho b\leq\sum_{i=0}^{\infty}\frac{r^{(i)}(-\rho)}{i!}\rho^ib=r(0)b=b.
\end{split}
\end{equation*}
Thus, the third condition in (\ref{eq89}) is also satisfied. The proof is complete. 
\end{proof}

The assumption $r(0)=1$ in Theorem \ref{thm31} is a fundamental condition for most discretization methods. This is since the steplength $\Delta t=0$, yielding  that the coefficient matrix of the discrete system is the identity matrix. 

\subsubsection{Compute Steplength Threshold}
The steplength threshold given in Theorem \ref{thm31} is related to $\rho$ and $\gamma.$ Recall that $\gamma$ is given in Lemma \ref{lemgam}, thus we only consider the computation of $\rho.$ 

Since $r(t)$ is a rational function,  all of its derivatives $r^{(i)}(t)$ have the same format, i.e.,  they are represented as quotients of two polynomial functions. Now recall that the radius of absolute monotonicity $\rho$ is defined as $r^{(i)}(t)\geq0$ for $t\in[-\rho,0].$ This requires that the polynomial function in the numerator of $r^{(i)}(t)$ is nonnegative for $t\in[-\rho,0]$.  Thus, a valid $\rho$ is the negative of the first negative real zero of this polynomial function.  Then an algorithm similar to the one presented in Section  \ref{sec:ty2} can be proposed to numerically compute $\rho.$ We are not repressing the algorithm here due to the space consideration.

\subsection{Parameter of Steplength Threshold}
According to Theorem \ref{thm123} and Theorem  \ref{thm31}, we have that the parameter $\gamma$ plays an important role to derive a large valid steplength threshold. In this section, we consider the effect of $\gamma$ to the steplength threshold. 

\subsubsection{Best Parameter}
Let us first consider the case for Taylor approximation type discretization methods. By simple modification, we have that $f_i(\Delta t)$ defined in (\ref{lemma453:eq3}) can be written as 
\begin{equation}\label{eq:291}
f_i(\Delta t)=\Delta t^i\sum_{k=i}^p\frac{(-1)^{k-i}}{k!}\binom{k}{i}(\gamma\Delta t)^{k-i}, \text{ for } i=0,1,...,p,
\end{equation}
which means that smaller $\gamma$ will yield larger steplength threshold for  Taylor type discretization method given as in (\ref{eq13}). Similarly, according to Theorem \ref{thm31}, we also have that  smaller $\gamma$ will yield larger steplength threshold for the rational function type discretization methods (\ref{eq:rat}).  Thus we prefer the smallest possible $\gamma$, which in fact can be computed by solving the following  optimization problem
\begin{equation}\label{opt12}
\min \{\gamma\,|\, H+\gamma I\geq0, ~HG=GA_c, \text{ and } Hb\leq0\}.
\end{equation}
In  optimization problem (\ref{opt12}),  the variables are $H$ and $\gamma$, while $G,A_c$ and $b$ are known, thus problem (\ref{opt12}) is a linear optimization problem,  which can be easily solved by  existing optimization algorithms, e.g., simplex methods \cite{bert} or interior point methods \cite{roos}. In particular, if there exists an $H\geq0$ such that $HG=GA_c$ and $Hb\leq0,$ then the optimal solution, denoted by $\gamma^*,$ of (\ref{opt12}) is nonpositive. In this case, according to (\ref{eq:291}), we have $f_i(\Delta t)\geq0$ for all $\Delta t\geq0$. Then according to 
the proof of Lemma \ref{leminvar}, we have that the steplength threshold for invariance preserving for  Taylor approximation type discretization methods (\ref{eq13}) on polyhedron $\mathcal{P}$ is infinity. Similarly, if $\gamma^*\leq0$, according to Theorem \ref{thm31}, we have that the steplength threshold for invariance preserving for  rational function type discretization methods (\ref{eq:rat}) on polyhedron $\mathcal{P}$ is also infinity. Thus, we have the following theorem. 

\begin{theorem}\label{lem:zero}
If the optimal solution of (\ref{opt12}) is nonpositive, then the steplength threshold for invariance preserving on the polyhedron $\mathcal{P}$ is infinity for  Taylor approximation type discretization methods (\ref{eq13}) and  rational function type discretization methods (\ref{eq:rat}).
\end{theorem}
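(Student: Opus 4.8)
The plan is to deduce both claims from the machinery already assembled; the point is that a nonpositive optimal value of~(\ref{opt12}) lets us produce a \emph{single} matrix certifying the discrete invariance condition~(\ref{invdis}) for \emph{every} steplength at once. First I would unpack the hypothesis: since~(\ref{opt12}) is a linear optimization problem in $(H,\gamma)$, ``the optimal solution is nonpositive'' means there is a feasible pair $(H^*,\gamma^*)$ with $\gamma^*\le 0$. Adding $H^*+\gamma^*I\ge 0$ and $-\gamma^*I\ge 0$ gives $H^*\ge 0$, while we still have $H^*G=GA_c$ and $H^*b\le 0$; thus the hypothesis amounts to the existence of an $H^*\ge 0$ with $H^*G=GA_c$ and $H^*b\le 0$, and $(H^*,0)$ is itself feasible for~(\ref{opt12}), so we may run the argument of Lemma~\ref{leminvar} with the admissible choice $\gamma=0$, $\hat H=H^*$.

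For the Taylor approximation type method~(\ref{eq13}), set $\tilde H(\Delta t)=\sum_{i=0}^p\frac{1}{i!}(H^*)^i\Delta t^i$ as in~(\ref{p2:eq3}) and verify~(\ref{invdis}) directly for \emph{all} $\Delta t\ge 0$: (i) $\tilde H(\Delta t)\ge 0$ because $(H^*)^i\ge 0$ and $\Delta t\ge 0$; (ii) $\tilde H(\Delta t)G=GA_d$ by the recursion $(H^*)^iG=GA_c^i$ of~(\ref{p21:eq1}) combined with~(\ref{eq13}); (iii) $\tilde H(\Delta t)b\le b$, since $H^*\ge 0$ and $H^*b\le 0$ imply, by induction on $i$, that $(H^*)^ib\le 0$ for all $i\ge 1$, whence $\tilde H(\Delta t)b=b+\sum_{i\ge 1}\frac{\Delta t^i}{i!}(H^*)^ib\le b$. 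By Theorem~\ref{poly}, $\mathcal P$ is then an invariant set for the discrete system for every $\Delta t\ge 0$, so the threshold of Definition~\ref{invpre} is infinite. Equivalently, with $\gamma=0$ the polynomials in~(\ref{eq:291}) collapse to $f_i(\Delta t)=\Delta t^i/i!$, which are nonnegative on $[0,\infty)$, so every $\tau_i=\infty$ in Lemma~\ref{leminvar}.

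For the rational function type method~(\ref{eq:rat}), I would invoke Theorem~\ref{thm31} with $H=H^*$: since $H^*\ge 0$, the hypothesis $H^*+\gamma I\ge 0$ of that theorem holds for \emph{every} $\gamma>0$, so for each such $\gamma$ the number $\rho/\gamma$, where $\rho$ is the radius of absolute monotonicity of $r$, is a valid steplength threshold. Given any $\Delta t_0>0$, choosing $\gamma=\rho/(2\Delta t_0)>0$ puts $\Delta t_0$ into $[0,\rho/\gamma]$, so $\mathcal P$ is invariant for the discrete system at steplength $\Delta t_0$; as $\Delta t_0$ was arbitrary, the threshold is infinite. This uses $\rho>0$, which is needed already for Theorem~\ref{thm31} to yield a meaningful threshold and holds for the standard discretization methods.

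The routine part is the short re-verification of~(\ref{invdis}); the only steps that need a little care are the deduction in the first paragraph that ``$\gamma^*\le 0$'' really permits taking $\gamma=0$, i.e.\ $H^*\ge 0$, and, in the rational case, letting $\gamma\downarrow 0$ so as to sweep out all of $[0,\infty)$ rather than one bounded interval. I do not anticipate a genuine obstacle: the statement is essentially a corollary of Theorems~\ref{thm123} and~\ref{thm31}, its content being the observation that a nonpositive optimal $\gamma$ removes the steplength restriction entirely.
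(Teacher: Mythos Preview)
Your proposal is correct and follows essentially the same route as the paper's own justification (which appears in the paragraph immediately preceding the theorem rather than as a formal proof): from $\gamma^*\le 0$ one extracts an $H^*\ge 0$ satisfying~(\ref{invcont}), whence for Taylor methods the coefficients in~(\ref{eq:291}) collapse to $f_i(\Delta t)=\Delta t^i/i!\ge 0$ and Lemma~\ref{leminvar} gives $\tau=\infty$, while for rational methods Theorem~\ref{thm31} with $\gamma\downarrow 0$ yields thresholds $\rho/\gamma\to\infty$. Your write-up is more explicit than the paper's---you spell out why $\gamma^*\le 0$ forces $H^*\ge 0$, give a direct verification of~(\ref{invdis}) in the Taylor case, and flag the implicit hypothesis $\rho>0$ in the rational case---but the underlying argument is the same.
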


One should note that the steplength thresholds given in Theorem \ref{thm123} and Theorem  \ref{thm31} may not be the largest steplength thresholds. For example, for the Taylor approximation type discretization methods, we aim to find the first positive zeros of finitely many polynomial functions. In fact, the first positive zeros may not be the best in some cases. For example, if  the function is given as  $f(\Delta t)=(\Delta t-1)^2(\Delta t-2)^2$, then its first positive zero is 1. Then, by our methods, we have $\tau=1$. However, it is clear that $f(\Delta t)\geq0$ for any $\Delta t\geq0.$ Thus, in this case, we have $\tau=\infty.$ 

If the first zero, $\Delta t^*$, of a function is a local minimum of this function, i.e., $f'(\Delta t^*)=0,$ then the first zero should not be used for computing the steplength threshold.  This is since the function is tangent to the $x$ axis at the first zero. To verify if a zero is a local minimum, one can check the first order and second order directives $f'(\Delta t^*)$ and $f''(\Delta t^*)$. If $f(\Delta t^*)=0$ and $f'(\Delta t^*)<0$, then we can say that $\Delta t^*$ is not a local minimum, and thus it is a valid positive zero. If $f(\Delta t^*)=0,f'(\Delta t^*)=0, $ and $f''(\Delta t^*)>0$, we can say that $\Delta t^*$ is a local minimum. Then we have to make $\Delta t$ to be larger, and use an algorithm similar to the one presented in Section \ref{sec:ty2} to find the next zero of $f(\Delta t)$.

\subsubsection{Relation to the Forward Euler Method}
The following lemma presents the relationship between $\gamma$ that satisfies the constraints in (\ref{opt12}) and the operator $I+\gamma^{-1}A_c$ on $\mathcal{P}.$ Recall that $I+\Delta t A_c$ is the coefficient matrix of the discrete system by using the forward Euler method.

\begin{lemma}\label{lem:reuler}
The conditions $H+\gamma I\geq0, HG=GA_c$, and $Hb\leq0$ are satisfied if and only if $(I+{\gamma}^{-1}A_c)\mathcal{P}\subseteq\mathcal{P}$.
\end{lemma}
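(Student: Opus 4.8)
The plan is to prove the two directions separately, translating between the ``multiplier matrix'' formulation of invariance (Theorem \ref{poly}) and the geometric formulation $A_d\mathcal{P}\subseteq\mathcal{P}$ mentioned right after Definition \ref{def1}. The key observation is that $I+\gamma^{-1}A_c$ is exactly the coefficient matrix $A_d$ of the forward Euler discretization with steplength $\Delta t=\gamma^{-1}$, so by Theorem \ref{poly} the statement $(I+\gamma^{-1}A_c)\mathcal{P}\subseteq\mathcal{P}$ is equivalent to the existence of some $\tilde H\in\mathbb{R}^{m\times m}$ with $\tilde H\geq0$, $\tilde HG=G(I+\gamma^{-1}A_c)$, and $\tilde Hb\leq b$. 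The natural candidate is $\tilde H=I+\gamma^{-1}H$, and the whole proof is the verification that $H$ satisfies the three continuous conditions (with parameter $\gamma$) if and only if this $\tilde H$ satisfies the three discrete conditions.

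First I would do the forward direction. Assume $H+\gamma I\geq0$, $HG=GA_c$, and $Hb\leq0$. Set $\tilde H=I+\gamma^{-1}H=\gamma^{-1}(\gamma I+H)\geq0$, using $\gamma>0$. Next, $\tilde HG=G+\gamma^{-1}HG=G+\gamma^{-1}GA_c=G(I+\gamma^{-1}A_c)$, which is the second discrete condition. Finally, $\tilde Hb=b+\gamma^{-1}Hb\leq b$ since $Hb\leq0$ and $\gamma^{-1}>0$. Hence $\tilde H$ witnesses $(I+\gamma^{-1}A_c)\mathcal{P}\subseteq\mathcal{P}$ via Theorem \ref{poly} applied to the discrete system with $A_d=I+\gamma^{-1}A_c$ and $b$ replaced by itself (note $\tilde Hb\leq b$ is precisely the third discrete condition).

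For the converse, assume $(I+\gamma^{-1}A_c)\mathcal{P}\subseteq\mathcal{P}$. By Theorem \ref{poly} there exists $\tilde H\geq0$ with $\tilde HG=G(I+\gamma^{-1}A_c)$ and $\tilde Hb\leq b$. Define $H=\gamma(\tilde H-I)$. Then $H+\gamma I=\gamma\tilde H\geq0$; moreover $HG=\gamma(\tilde HG-G)=\gamma\big(G(I+\gamma^{-1}A_c)-G\big)=\gamma\cdot\gamma^{-1}GA_c=GA_c$; and $Hb=\gamma(\tilde Hb-b)\leq0$ since $\tilde Hb\leq b$ and $\gamma>0$. Thus $H$ satisfies the three stated conditions, completing the equivalence. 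One subtlety worth a remark is that the $H$ produced here need not coincide with the original $H$ from (\ref{opt12}); the lemma only asserts existence of \emph{some} $H$ with these properties, which is exactly what the multiplier characterization in Theorem \ref{poly} provides, so there is no circularity.

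I do not expect a genuine obstacle here: the argument is a direct affine change of variables $H\leftrightarrow\tilde H$ together with two invocations of Theorem \ref{poly}. The only point requiring minor care is making sure the ``$\tilde Hb\leq b$'' form of the discrete condition (as opposed to ``$\tilde Hb\leq0$'') is the one used, i.e. that the forward Euler map is applied to the polyhedron $\mathcal{P}=\{Gx\leq b\}$ with the \emph{same} right-hand side $b$, so that Theorem \ref{poly}'s discrete condition reads $\tilde Hb\leq b$; this matches (\ref{invdis}) exactly. No computation beyond the three one-line verifications in each direction is needed.
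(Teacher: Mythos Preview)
Your proof is correct. The backward direction is essentially identical to the paper's: both invoke Theorem~\ref{poly} to obtain a discrete multiplier $\tilde H$ and then set $H=\gamma\tilde H-\gamma I$.

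The forward direction differs slightly. The paper argues geometrically: for $x\in\mathcal{P}$ it computes $G(I+\gamma^{-1}A_c)x$ directly, using $HG=GA_c$, $H+\gamma I\geq0$, $Gx\leq b$, and $Hb\leq0$ to conclude $G(I+\gamma^{-1}A_c)x\leq b$, hence $(I+\gamma^{-1}A_c)x\in\mathcal{P}$. You instead construct the discrete multiplier $\tilde H=I+\gamma^{-1}H$ explicitly and invoke Theorem~\ref{poly}. Your route is more symmetric (Theorem~\ref{poly} in both directions, same affine change of variables $H\leftrightarrow\tilde H$), while the paper's forward direction is marginally more self-contained since it avoids appealing to the ``if'' part of Theorem~\ref{poly}. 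Either way the content is the same three one-line verifications, and your remark about the existential reading of the lemma is on point.
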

\begin{proof}
$``\Rightarrow"$ For $x\in \mathcal{P}$, i.e., $Gx\leq b$, we have  
\begin{equation*}
\begin{split}
G(I+{\gamma}^{-1}A_c)x&=Gx+\gamma^{-1}GA_cx\\
&=Gx+\gamma^{-1}HGx  ~~~~~~\leftarrow \text{since } HG=GA_c\\
&=\gamma^{-1}(H+\gamma I)Gx\\
&\leq \gamma^{-1}(H+\gamma I)b~~~~~~~\leftarrow \text{since } Gx\leq b \text{ and } H+\gamma I\geq0\\
&=b+\gamma^{-1}Hb\leq b~~~~~~\leftarrow \text{since } Hb\leq0.\\ 
\end{split}
\end{equation*}
Thus we have $(I+\gamma^{-1}A_c)x\in \mathcal{P}$, i.e., $(I+\gamma^{-1}A_c)\mathcal{P}\subseteq \mathcal{P}$.

$``\Leftarrow"$ We note that $(I+\gamma^{-1}A_c)\mathcal{P}\subseteq \mathcal{P}$ means that $\mathcal{P}$ is an invariant set for the following discrete system:
\begin{equation*}
x_{k+1}=(I+\gamma^{-1}A_c)x_k.
\end{equation*}
Then according to Theorem \ref{poly}, we have that there exists an $\tilde{H}\in \mathbb{R}^{m\times m}$, such that $\tilde{H}\geq0, \tilde{H}G=G(I+\gamma^{-1}A_c),$ and $\tilde{H}b\leq b.$ Let $\hat{H}=\gamma \tilde{H},$ and then we have 
\begin{equation*}
\hat{H}\geq0, ~\hat{H}G=G(\gamma I+A_c), \text{ and } \hat{H}b\leq \gamma b,
\end{equation*}
i.e.,
\begin{equation*}
(\hat{H}-\gamma I)+\gamma I\geq0, ~(\hat{H}-\gamma I)G=GA_c, \text{ and } (\hat{H}-\gamma I)b\leq 0.
\end{equation*}
Thus replacing $\hat{H}-\gamma I$ by $H$, the proof is complete. 
\end{proof}

We highlight that  the forward Euler method is  used to analyze invariance in  continuous dynamical systems  in \cite{Blanchini3, blan5}.   In \cite{Blanchini3}, the largest domain of attraction of a continuous dynamical system is approximated with arbitrarily precision by using a polyhedral domain of attraction of a discrete dynamical system. This discrete dynamical system is  obtained by the forward Euler method and referred to as Euler approximating system in \cite{Blanchini3}. The value of $\gamma^{-1}$ in Lemma \ref{lem:reuler} can be considered as the step size of the forward Euler method for preserving the invariance of polyhedral $\mathcal{P}$, and the value of $\gamma$ is easily quantified.  The existence of a step size for preserving the contractivity of a set is also presented in \cite{Blanchini3} for the forward Euler method. 
A similar result to Lemma \ref{lem:reuler} is presented in \cite{blan5}, which is an extension of \cite{dorea2}, for (A,B)-invariance condition.  The forward Euler method is also applied to build the connection between continuous  and discrete dynamical systems. The value of the step size of the forward Euler method in \cite{blan5} for (A,B)-invariance condition is computed in a similar way to the one given as in  Lemma \ref{lem:reuler}.

\subsection{Forward Euler Method}

As illustration, we consider the simplest discretization method, the forward Euler method, in this section. For simplicity, a polytope, i.e., a bounded polyhedron, is chosen as the invariant set for the forward Euler method. A polytope can be defined in terms of  convex combination of its vetices, i.e., 
\begin{equation}\label{eq:tope}
\mathcal{P}=\text{conv}\{x^1,x^2,...,x^\ell\}=\Big\{x\,|\,x=\sum_{i=1}^\ell \lambda_ix^i, ~~\sum_{i=1}^\ell \lambda_i=1, ~~\lambda_i\geq0\Big\},
\end{equation}
where $\{x^i\}$ are the vertices of $\mathcal{P}$. A sufficient and necessary condition under which a polytope is an invariant set for the continuous system is presented below. 

\begin{lemma}\label{lem4}
\emph{\cite{song1}} The polytope $\mathcal{P}$ defined as in (\ref{eq:tope}) is an invariant set for the continuous system (\ref{dyna1}) if and only if
$A_cx^i\in\mathcal{T_P}(x^i), \text{ for } i=1,2,...,\ell,$
where $\mathcal{T_P}(x^i)$ is the tangent cone\footnote{
 The tangent cone  of a set
$\mathcal{S}$ at $x$, denoted by $\mathcal{T}_\mathcal{S}(x)$, is given as
$\mathcal{T}_\mathcal{S}(x)=\{y\in
\mathbb{R}^n\;|\;\underset{t\rightarrow0_+}{\lim\inf}\frac{{\text{dist}}(x+ty,\mathcal{S})}{t}=0\},
$
where   $\text{dist}(x,\mathcal{S})=\inf_{s\in\mathcal{S}}\|x-s\|.$} at $x^i$, which can be given 
\begin{equation}\label{eq5}
\mathcal{T_P}(x^i)=\{y\,|\,y=\sum_{j\neq i}\gamma_j(x^j-x^i),~\gamma_j\geq0\}.
\end{equation}
\end{lemma}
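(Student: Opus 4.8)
The plan is to reduce the statement to the classical Nagumo subtangentiality criterion and then to exploit the polyhedral structure of $\mathcal{P}$. The guiding observation is that, since $e^{A_ct}$ is linear and $\mathcal{P}=\text{conv}\{x^1,\dots,x^\ell\}$, we have $e^{A_ct}\mathcal{P}=\text{conv}\{e^{A_ct}x^1,\dots,e^{A_ct}x^\ell\}$, so $\mathcal{P}$ is invariant for (\ref{dyna1}) exactly when each vertex trajectory $e^{A_ct}x^i$ stays in $\mathcal{P}$ for all $t\ge 0$. By Nagumo's theorem, invariance of the closed convex set $\mathcal{P}$ for $\dot x=A_cx$ is equivalent to the subtangentiality condition $A_cx\in\mathcal{T_P}(x)$ for \emph{every} $x\in\mathcal{P}$; this is the bridge I would use in both directions.

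For the \emph{necessity} direction there is essentially nothing to prove beyond specializing the subtangentiality condition to the vertices $x=x^i$, which is immediate once Nagumo's theorem is invoked.

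The substance lies in the \emph{sufficiency} direction, where the hypothesis is known only at the vertices and must be promoted to subtangentiality at every point of $\mathcal{P}$. I would pass to a halfspace representation $\mathcal{P}=\{x\mid Gx\le b\}$ and use the standard fact that $\mathcal{T_P}(x)=\{y\mid g_j^Ty\le 0\ \text{for all}\ j\in A(x)\}$, where $A(x)=\{j\mid g_j^Tx=b_j\}$ is the active index set, first checking that this description agrees with the generator form (\ref{eq5}) when $x$ is a vertex. Writing a general $x\in\mathcal{P}$ as $x=\sum_i\lambda_ix^i$ with $\lambda_i\ge0$ and $\sum_i\lambda_i=1$, I would observe that convexity forces every constraint active at $x$ to be active at each vertex $x^i$ with $\lambda_i>0$, and that the generator form (\ref{eq5}) of $\mathcal{T_P}(x^i)$ then yields $g_j^TA_cx^i\le 0$ for exactly those active indices $j$. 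Averaging over the vertices with $\lambda_i>0$ gives $g_j^TA_cx\le0$ for all $j\in A(x)$, i.e.\ $A_cx\in\mathcal{T_P}(x)$, and Nagumo's theorem closes the argument.

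The main obstacle I anticipate is not a single calculation but the careful handling of the Nagumo/polyhedral interface: establishing (or cleanly citing) the subtangentiality theorem for closed convex sets, verifying that the generator cone (\ref{eq5}) coincides with the active-constraint description of the tangent cone, and dealing with degenerate (lower-dimensional face) points of $\mathcal{P}$ where several constraints are simultaneously active. An alternative route that bypasses Nagumo would be to argue directly through Theorem \ref{poly}, constructing the matrix $H$ required by (\ref{invcont}) from the vertex data $\{A_cx^i\in\mathcal{T_P}(x^i)\}$; this appears to demand more bookkeeping than the subtangentiality argument, so I would retain it only as a fallback.
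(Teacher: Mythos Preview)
The paper does not prove Lemma~\ref{lem4}; it is quoted from \cite{song1} and used without argument as input to Corollary~\ref{cor:bound}, so there is no in-paper proof to compare against.

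Your route via Nagumo's subtangentiality theorem is correct and is the standard one. Necessity is immediate by specialization to the vertices. For sufficiency, your key step---that any constraint $g_j^Tx=b_j$ active at $x=\sum_i\lambda_ix^i$ is active at every $x^i$ with $\lambda_i>0$---follows from $b_j=\sum_i\lambda_ig_j^Tx^i\le\sum_i\lambda_ib_j=b_j$, forcing equality termwise; and then the generator form (\ref{eq5}) gives $g_j^TA_cx^i=\sum_{k\neq i}\gamma_k(g_j^Tx^k-b_j)\le0$, so averaging with weights $\lambda_i$ yields $g_j^TA_cx\le0$ for all $j\in A(x)$. The identification of (\ref{eq5}) with the tangent cone is the elementary fact $\mathcal{T_P}(x^i)=\mathrm{cone}(\mathcal{P}-x^i)=\mathrm{cone}\{x^j-x^i:j\neq i\}$ for a polytope given in vertex form, so the ``obstacle'' you flag is routine. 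Your fallback through Theorem~\ref{poly} would also succeed but is, as you note, heavier bookkeeping.
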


\begin{corollary}\label{cor:bound}
The polyhedron $\mathcal{P}$ defined as in (\ref{eq:tope}) is an invariant set for the continuous system (\ref{dyna1}) if and only if
there exist $\gamma_j^{(i)}\geq0, j=1,2,...,\ell$, such that
\begin{equation}\label{eq6}
A_cx^i=\sum_{j\neq i}\gamma_j^{(i)}(x^j-x^i), \text{ for all } i=1,2,...,\ell.
\end{equation}
Let $\epsilon^i=\big(\sum_{j\neq i}\gamma_j^{(i)}\big)^{-1}$ for $i=1,2,...,\ell,$ then 
\begin{equation}
x^i+\Delta tA_cx^i\in \mathcal{P} \text{ for any } \Delta t\in[0,\epsilon^i].
\end{equation}
\end{corollary}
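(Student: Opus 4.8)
The plan is to read the first equivalence essentially off Lemma~\ref{lem4}. That lemma says $\mathcal{P}$ is an invariant set for \eqref{dyna1} exactly when $A_cx^i\in\mathcal{T_P}(x^i)$ for every $i=1,\dots,\ell$, and \eqref{eq5} describes $\mathcal{T_P}(x^i)$ as precisely the set of vectors $\sum_{j\neq i}\gamma_j(x^j-x^i)$ with $\gamma_j\ge0$. Substituting this description into the membership condition $A_cx^i\in\mathcal{T_P}(x^i)$ yields, for each $i$, the existence of nonnegative numbers $\gamma_j^{(i)}$ with $A_cx^i=\sum_{j\neq i}\gamma_j^{(i)}(x^j-x^i)$, which is exactly \eqref{eq6}. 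So this half requires no new argument beyond unfolding \eqref{eq5}.

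For the steplength claim I would substitute \eqref{eq6} into $x^i+\Delta tA_cx^i$ and regroup the terms:
\[
x^i+\Delta tA_cx^i=x^i+\Delta t\sum_{j\neq i}\gamma_j^{(i)}(x^j-x^i)=\Bigl(1-\Delta t\sum_{j\neq i}\gamma_j^{(i)}\Bigr)x^i+\sum_{j\neq i}\bigl(\Delta t\,\gamma_j^{(i)}\bigr)x^j .
\]
Denote the coefficient of $x^i$ by $\lambda_i$ and the coefficient of $x^j$, $j\neq i$, by $\lambda_j$. Then one checks in a single line that $\sum_{k=1}^\ell\lambda_k=1$ for every $\Delta t$, that $\lambda_j=\Delta t\,\gamma_j^{(i)}\ge0$ whenever $\Delta t\ge0$, and that $\lambda_i=1-\Delta t\sum_{j\neq i}\gamma_j^{(i)}\ge0$ precisely when $\Delta t\le\bigl(\sum_{j\neq i}\gamma_j^{(i)}\bigr)^{-1}=\epsilon^i$. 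Hence for every $\Delta t\in[0,\epsilon^i]$ the point $x^i+\Delta tA_cx^i$ is a convex combination of the vertices $x^1,\dots,x^\ell$, and therefore lies in $\mathcal{P}$ by the representation \eqref{eq:tope}.

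The only points needing a word of care are bookkeeping: although \eqref{eq6} writes the multipliers as $\gamma_j^{(i)}$ for $j=1,\dots,\ell$, only the indices $j\neq i$ actually enter the sum, so there is no ambiguity; and in the degenerate case $\sum_{j\neq i}\gamma_j^{(i)}=0$ one has $A_cx^i=0$, the quantity $\epsilon^i$ is read as $+\infty$, and $x^i+\Delta tA_cx^i=x^i\in\mathcal{P}$ trivially for all $\Delta t\ge0$. Beyond this there is no substantive obstacle: the whole statement is just a convex-combination rewriting of Lemma~\ref{lem4}, and the one genuine idea is choosing the largest $\Delta t$ for which the coefficient of $x^i$ stays nonnegative, which is exactly $\epsilon^i$.
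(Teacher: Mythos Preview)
Your proof is correct and follows essentially the same approach as the paper: both read the equivalence directly off Lemma~\ref{lem4} and \eqref{eq5}, and both establish the steplength claim by writing $x^i+\Delta tA_cx^i$ as a convex combination of the vertices using \eqref{eq6}. The only cosmetic difference is that the paper first checks the endpoint $\Delta t=\epsilon^i$ (where the coefficient of $x^i$ vanishes) and then interpolates between $x^i$ and $x^i+\epsilon^iA_cx^i$ via convexity of $\mathcal{P}$, whereas you verify the convex-combination coefficients directly for every $\Delta t\in[0,\epsilon^i]$; your version is slightly more direct and also handles the degenerate case $\sum_{j\neq i}\gamma_j^{(i)}=0$, which the paper leaves implicit.
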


\begin{proof}
According to Lemma \ref{lem4} and equation (\ref{eq5}),  equation (\ref{eq6}) is immediate. According to (\ref{eq6}) and $\epsilon^i\sum_{j\neq i}w_j^{(i)}=1$, we have
\begin{equation}\label{eq90}
\epsilon^iA_cx^i=\sum_{j\neq i}\epsilon^i\gamma_j^{(i)}(x^j-x^i)
=\sum_{j\neq i}\epsilon^i\gamma_j^{(i)}x^j-\sum_{j\neq i}\epsilon^i\gamma_j^{(i)}x^i
=\sum_{j\neq i}\epsilon^i\gamma_j^{(i)}x^j-x^i.
\end{equation}
 According to (\ref{eq90}), we have
$
x^i+\epsilon^iA_cx^i=\sum_{j\neq i}\epsilon^i\gamma_j^{(i)}x^j,
$ which is a convex combination of $\{x^j\}$, thus $x^i+\epsilon^iA_cx^i\in\mathcal{P}.$ For any  $\Delta t\in[0,\epsilon^i]$, by the convexity of $\mathcal{P},$ we have 
\begin{equation*}
x^i+\Delta tA_cx^i=\frac{\Delta t}{\epsilon^i}(x_i+\epsilon^iA_cx^i)+\frac{\epsilon^i-\Delta t}{\epsilon^i}x^i\in \mathcal{P},
\end{equation*}
which completes the proof. 
\end{proof}

We now consider the calculation of $\epsilon^i$, where $\epsilon^i$ is defined as in Corollary \ref{cor:bound}. By the formula of $\epsilon^i$, we need to compute $\gamma_j^{(i)}, j=1,2,...,\ell$, such that (\ref{eq6}) is satisfied. In fact, this can be achieved by solving the following optimization problem:
\begin{equation}\label{opt87}
  \min \Big\{\sum_{j\neq i}\gamma_j^{(i)}\,|\, \sum_{j\neq i}\gamma_j^{(i)}(x^j-x^i)=A_cx^i,~ \gamma_j^{(i)}\geq0.\Big\}
\end{equation}
Since $x^1,x^2,...,x^k$, and $A_c$ are known,  optimization problem (\ref{opt87}) is a linear optimization problem. One may obtain different values of $\hat\gamma_j^{(i)},  j=1,2,...,\ell,$ by choosing other objective functions in (\ref{opt87}). The advantage by using the current objective function in  (\ref{opt87}) is that this optimization problem yields the largest $\epsilon^i$ that satisfies (\ref{eq6}). This is since the  objective function in (\ref{opt87}) is  $(\epsilon^i)^{-1}.$
Thus, the value of  $\epsilon^i$ obtained by solving the optimization problem (\ref{opt87}) is the largest possible value of $\epsilon^i$. 

An alternative  is presented by the following discussion. Equation (\ref{eq5}) implies that $Ax^i$ is a feasible direction, i.e.,
$x^i+\tau^iA_cx^i\in \mathcal{P},$  for sufficiently small $ \tau^i>0.$
Then we can formulate the following linear optimization problem:
\begin{equation}\label{opt31}
  \max \Big\{\tau^i\,|\,\sum_{j=1}^\ell u_j^{(i)}x^j=x^i+\tau^iA_cx^i,~ \sum_{j=1}^\ell u_j^{(i)}=1,~ u_j^{(i)}\geq0\Big\}.
\end{equation}

Optimization problems (\ref{opt87}) and (\ref{opt31}) are equivalent problems, i.e., we claim that  $\tau^i$ is equal to $\epsilon^i$. Observing that $\sum_{j=1}^n\beta_j^{(i)}=1$ for  the first constraint in (\ref{opt31}), we have
\begin{equation}
\tau^iA_cx^i=\sum_{j=1}^\ell u_j^{(i)}x^j-\sum_{j=1}^\ell u_j^{(i)}x^i
          =\sum_{j=1}^\ell \tau^i\frac{u_j^{(i)}}{\tau^i}x^j-\sum_{j=1}^\ell \tau^i\frac{u_j^{(i)}}{\tau^i}x^i
          =\tau^i\sum_{j=1}^\ell \frac{u_j^{(i)}}{\tau^i}(x^j-x^i),
\end{equation}
i.e., $A_cx^i=\sum_{j\neq i} \frac{u_j^{(i)}}{\tau^i}(x^j-x^i).$ This,  by letting $\frac{u_j^{(i)}}{\tau^i}=\gamma_j^{(i)}$ gives the first constraint in (\ref{opt87}). 

According to the argument for $\epsilon^i$ above, we have the following theorem. 

\begin{theorem}\label{thm43}
Assume that the polytope $\mathcal{P}$ defined as in (\ref{eq:tope}) is an invariant set for the continuous system (\ref{dyna1}), and the forward Euler method is applied to (\ref{dyna1}). Then, $\tau=\min_{i=1,2,...,\ell}\{\epsilon^i\}$, where $\epsilon^i$ is defined as in Corollary \ref{cor:bound}, is the largest steplength threshold $\tau>0$ for invariance preserving of the forward Euler method on $\mathcal{P}$. 
\end{theorem}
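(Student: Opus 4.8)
The plan is to show two things: first, that $\tau = \min_i\{\epsilon^i\}$ is a valid steplength threshold, i.e., $\mathcal{P}$ is invariant for the discrete system $x_{k+1}=(I+\Delta t A_c)x_k$ for every $\Delta t\in[0,\tau]$; and second, that no larger value works, i.e., for any $\Delta t > \tau$ the polytope $\mathcal{P}$ fails to be invariant for the forward Euler discretization. Validity follows almost directly from Corollary \ref{cor:bound}: for a fixed $\Delta t\in[0,\tau]$ we have $\Delta t\le \epsilon^i$ for every vertex $x^i$, hence $x^i+\Delta t A_c x^i\in\mathcal{P}$ for all $i=1,\dots,\ell$. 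Since the map $x\mapsto (I+\Delta t A_c)x$ is affine (in fact linear) and $\mathcal{P}=\mathrm{conv}\{x^1,\dots,x^\ell\}$, an arbitrary point $x=\sum_i\lambda_i x^i\in\mathcal{P}$ maps to $\sum_i\lambda_i\big(x^i+\Delta t A_c x^i\big)$, a convex combination of points of $\mathcal{P}$, hence in $\mathcal{P}$. This gives $(I+\Delta t A_c)\mathcal{P}\subseteq\mathcal{P}$, which by the alternative characterization of invariance in Section \ref{sec:back} means $\mathcal{P}$ is invariant for the discrete system.

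For maximality, I would argue by contradiction. Let $i^\star$ be an index attaining the minimum, so $\tau=\epsilon^{i^\star}$, and suppose $\Delta t>\tau$ were also a valid steplength threshold; then in particular $x^{i^\star}+\Delta t A_c x^{i^\star}\in\mathcal{P}$. The key point is that, by the construction in Corollary \ref{cor:bound}, the ray $\{x^{i^\star}+s A_c x^{i^\star}: s\ge 0\}$ leaves $\mathcal{P}$ exactly at $s=\epsilon^{i^\star}$: the point $x^{i^\star}+\epsilon^{i^\star}A_c x^{i^\star}=\sum_{j\neq i^\star}\epsilon^{i^\star}\gamma_j^{(i^\star)}x^j$ lies on the boundary of $\mathcal{P}$ (it is a convex combination of vertices other than $x^{i^\star}$, hence lies in a proper face not containing $x^{i^\star}$), and because $\epsilon^{i^\star}=\big(\sum_{j\neq i^\star}\gamma_j^{(i^\star)}\big)^{-1}$ is the \emph{largest} scalar for which such a representation exists — this is precisely the optimality content of the linear program \eqref{opt87}, whose optimal objective value is $(\epsilon^{i^\star})^{-1}$ — any $s>\epsilon^{i^\star}$ forces $x^{i^\star}+sA_c x^{i^\star}\notin\mathcal{P}$. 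Hence $\Delta t>\tau=\epsilon^{i^\star}$ contradicts $x^{i^\star}+\Delta t A_c x^{i^\star}\in\mathcal{P}$, and $\tau$ is the largest valid threshold.

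The routine half is the validity direction, which is just convexity plus Corollary \ref{cor:bound}. The step I expect to require the most care is the maximality claim, specifically the assertion that the ray from the vertex $x^{i^\star}$ in direction $A_c x^{i^\star}$ exits $\mathcal{P}$ exactly at parameter value $\epsilon^{i^\star}$ and not later. One must rule out the possibility that $A_c x^{i^\star}$ points along a face of $\mathcal{P}$ (the degenerate case $A_c x^{i^\star}=0$ or $\tau^{i^\star}=\infty$, where the optimal value of \eqref{opt87} is $0$), in which case $\epsilon^{i^\star}=\infty$ and the minimum should be interpreted accordingly; and in the non-degenerate case one must invoke that the optimal value of the linear program \eqref{opt87} (equivalently \eqref{opt31}) is attained and equals $(\epsilon^{i^\star})^{-1}$, so that by LP optimality no feasible representation with a smaller sum $\sum_{j\neq i^\star}\gamma_j^{(i^\star)}$ — hence no larger $\epsilon^{i^\star}$ — exists. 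Tying this optimality of \eqref{opt87} cleanly to the geometric "exit parameter" of the ray is the crux of the argument; everything else is bookkeeping with convex combinations.
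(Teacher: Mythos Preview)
Your validity argument is exactly the paper's proof: write $x=\sum_i\lambda_i x^i$, apply Corollary \ref{cor:bound} to each vertex, and use convexity. In fact the paper's proof stops there and does \emph{not} address maximality at all, despite the theorem asserting that $\tau$ is the \emph{largest} threshold. So your proposal goes further than the paper.

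Your maximality argument is structurally right but contains one false step. You write that $x^{i^\star}+\epsilon^{i^\star}A_c x^{i^\star}=\sum_{j\neq i^\star}\epsilon^{i^\star}\gamma_j^{(i^\star)}x^j$ ``is a convex combination of vertices other than $x^{i^\star}$, hence lies in a proper face not containing $x^{i^\star}$''. That implication fails: in a square, $(1/2,1/2)$ is a convex combination of three corners yet lies in the interior. So you cannot read off the exit parameter from that representation alone. The clean fix is the one you already gesture at: work directly with the linear program \eqref{opt31}, whose feasible set is exactly $\{\tau^i\ge 0: x^i+\tau^i A_c x^i\in\mathcal{P}\}$, so its optimal value is by definition the exit parameter of the ray. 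The paper's discussion preceding the theorem establishes that this optimal value coincides with the $\epsilon^i$ coming from \eqref{opt87} (provided $\epsilon^i$ is understood as the LP-optimal value, not an arbitrary choice of $\gamma_j^{(i)}$ in Corollary \ref{cor:bound}). With that identification, $s>\epsilon^{i^\star}$ immediately gives $x^{i^\star}+sA_c x^{i^\star}\notin\mathcal{P}$, and maximality follows. Drop the boundary/face sentence and invoke \eqref{opt31} directly; the rest of your outline is sound, including your handling of the degenerate case $A_c x^{i^\star}=0$.
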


\begin{proof}
For any $x\in\mathcal{P}$,  and $\Delta t\in[0,\tau],$ we have $x+\Delta tA_cx=\sum_{i=1}^{\ell}\lambda_i(x^i+\Delta tA_cx^i)$. According to Corollary \ref{cor:bound} and $0\leq\Delta t\leq \tau\leq \epsilon^i$, we have $x^i+\Delta tA_cx^i\in \mathcal{P}$. Thus we have $x+\Delta tA_cx\in \mathcal{P}.$ The proof is complete. 
\end{proof}

\section{Conclusions}\label{sec:con}
Many real world problems  are studied by developing dynamical system models.  In practice, continuous systems are usually solved by using discretization methods. In this paper, we consider  invariance preserving steplength thresholds on polyhedron, when the discrete system is obtained by using special classes of discretization methods. We particularly study three classes of discretization methods, which are: Taylor approximation type, rational function type, and the forward Euler method. 

For the first class of discretization methods, we show that a valid steplength threshold can be obtained by finding the first positive zeros of a finite number of polynomial functions. We also present a simple and efficient algorithm to numerically compute these positive zeros.  For the second class of discretization methods,  a valid  steplength threshold for invariance preserving is presented. This steplength threshold depends on the radius of absolute monotonicity, and can be computed by analogous method as in the first case. 
For the forward  Euler method we prove that the largest steplength threshold can be obtained by solving a finite number of linear optimization problems.

\section*{Acknowledgments} This research is supported by a Start-up grant of Lehigh University and by
TAMOP-4.2.2.A-11/1KONV-2012-0012: Basic research for the development of
hybrid and electric vehicles. The TAMOP Project is supported by the European Union
and co-financed by the European Regional Development Fund.



\providecommand{\href}[2]{#2}
\providecommand{\arxiv}[1]{\href{http://arxiv.org/abs/#1}{arXiv:#1}}
\providecommand{\url}[1]{\texttt{#1}}
\providecommand{\urlprefix}{URL }
\begin{thebibliography}{99}

\bibitem{bert}
     \newblock  D. Bertsimas and J. Tsitsiklis, 
     \newblock \emph{Introduction to Linear Optimization},
     \newblock Athena Scientific, Nashua, 1998.

\bibitem{bits1}
     \newblock G. Bitsoris,
     \newblock On the positive invariance of polyhedral sets for discrete-time systems,
     \newblock \emph{System and Control Letters},
     \newblock \textbf{11} (1998), 243-248.
     
\bibitem{Blanchini}
	\newblock F. Blanchini,
	\newblock Set invariance in control,
	\newblock \emph{Automatica},
	\newblock \textbf{35} (1999), 1747-1767.

\bibitem{Blanchini3}
	\newblock F. Blanchini and S. Miani,
	\newblock Constrained stabilization of continuous-time linear systems,
	\newblock \emph{Systems and Control Letters},
	\newblock \textbf{29} (1996), 95-102.

\bibitem{blan5}
	\newblock F. Blanchini, S. Miani, C.E.T. D\'{o}rea and J.C. Hennet,
	\newblock Discussion on: `(A, B)- invariance conditions of polyhedral
domains for continuous-time systems by C.E.T. D\'{o}rea and J.-C. Hennet', 
	\newblock \emph{European Journal of Control},
	\newblock \textbf{5} (1999), 82-86.
	
\bibitem{boyd}
	\newblock S. Boyd, L. El Ghaoui, E. Feron and V. Balakrishnan,
	\newblock \emph{Linear Matrix Inequalities in System and Control Theory},
	\newblock SIAM Studies in Applied Mathematics, Philadelphia, 1994.
	
\bibitem{caste}
	\newblock E.B. Castelan and J.C. Hennet, 
	\newblock On invariant polyhedra of continuous-time linear systems,
	\newblock \emph{IEEE Transactions on Automatic Control},
	\newblock \textbf{38} (1993), 1680-1685.
	
\bibitem{dorea2}
	\newblock C.E.T. D\'{o}rea and J.C. Hennet,
	\newblock ({A}, {B})-invariance conditions of polyhedral domains for continuous-time systems,
	\newblock \emph{European Journal of Control},
	\newblock \textbf{5} (1999), 70-81.
	
\bibitem{dorea}
	\newblock C.E.T. D\'{o}rea  and J.C. Hennet, 
	\newblock ({A},{B})-invariant polyhedral sets of linear discrete time systems,
	\newblock \emph{Journal of Optimization Theory and Applications},
	\newblock \textbf{103} (1999), 521-542.
	
\bibitem{ernst2}
	\newblock E. Hairer, S.P. N\o rsett and G. Wanner,
	\newblock \emph{Solving Ordinary Differential Equations I: Nonstiff Problems},
	\newblock Springer-Verlag, New York, 1993.
	
\bibitem{high1}
	\newblock N.J. Higham,
	\newblock \emph{Functions of Matrices: Theory and Computation},
	\newblock Society for Industrial and Applied Mathematics, Philadelphia, 2008.
	
\bibitem{horn}
	\newblock R. Horn and C. Johnson,
	\newblock \emph{Matrix Analysis},
	\newblock Cambridge University Press, Cambridge, 1990.
	
\bibitem{horv98}
	\newblock Z. Horv\'{a}th,
	\newblock Invariant cones and polyhedra for dynamical systems,
	\newblock \emph{Proceeding of the International Conference in Memoriam Gyula Farkas},
	\newblock 2005, 65-74.
	
\bibitem{horv05}
	\newblock Z. Horv\'{a}th,
	\newblock On the positivity step size threshold of Runge-Kutta methods,
	\newblock \emph{Applied Numerical Mathematics},
	\newblock \textbf{33} (2005), 341-356. 
	
\bibitem{song2}
	\newblock Z. Horv\'{a}th, Y. Song and T. Terlaky,
	\newblock Invariance preserving discretization methods of dynamical systems,
	\newblock \emph{Lehigh University, Department of Industrial and Systems Engineering, Technical Report 14T-009},
	\newblock 2014.
	
\bibitem{song1}
	\newblock Z. Horv\'{a}th, Y. Song and T. Terlaky,
	\newblock A novel unified approach to invariance in control,
	\newblock \emph{Lehigh University, Department of Industrial and Systems Engineering, Technical Report 14T-003},
	\newblock 2014.
	
\bibitem{karaa1}
	\newblock J.F.B.M. Kraaijevanger,
	\newblock Absolute monotonicity of polynomials occurring in the numerical solution of initial value problems,
	\newblock \emph{Numerische Mathematik},
	\newblock \textbf{48}, (1986), 303-322.
	
\bibitem{loewy}
	\newblock R. Loewy and H. Schneider,
	\newblock Positive operators on the $n$-dimensional ice cream cone,
	\newblock \emph{Journal of Mathematical Analysis and Applications},
	\newblock \textbf{49} (1975), 375-392.
	
\bibitem{roos}
	\newblock C. Roos, T. Terlaky and J.-Ph. Vial, 
	\newblock \emph{Interior Point Methods for Linear Optimization},
	\newblock Springer Science, Heidelberg, 2006.
	
\bibitem{spik}
	\newblock M.N. Spijker,
	\newblock Contractivity in the numerical solution of initial value problems,
	\newblock \emph{Numerische Mathematik},
	\newblock \textbf{42} (1983), 271-290.
	
\bibitem{stern}
	\newblock R. Stern and H. Wolkowicz, 
	\newblock Exponential nonnegativity on the ice cream cone,
	\newblock \emph{SIAM Journal on Matrix Analysis and Applications},
	\newblock \textbf{12} (1991), 160-165.
	
\bibitem{sturmfel}
	\newblock B. Sturmfels,
	\newblock \emph{Solving Systems of Polynomial Equations},
	\newblock CBMS Lectures Series, American Mathematical Society, 2002.
	
\bibitem{Vander}
	\newblock J. Vandergraft,
	\newblock Spectral properties of matrices which have invariant cones,
	\newblock \emph{SIAM Journal on Applied Mathematics},
	\newblock \textbf{16} (1968), 1208-1222.
		
	

\end{thebibliography}
\medskip
Received xxxx 20xx; revised xxxx 20xx.
\medskip

\end{document}